\crefname{hypothesis}{Hypothesis}{Hypotheses}
\crefname{fact}{Fact}{Facts}
\title{Entropy Stable Nodal Discontinuous Galerkin Methods via Quadratic Knapsack Limiting\thanks{
\funding{Supported by National Science Foundation awards DMS-1943186 and DMS-223148.}}}
\author{Brian Christner\thanks{Department of Computational Applied Mathematics and Operations Research, Rice University 
  (\email{bc71@rice.edu}).}
\and Jesse Chan\thanks{Oden Institute for Computational Engineering and Sciences and Department of Aerospace Engineering and Engineering Mechanics, The University of Texas at Austin}
  (\email{jesse.chan@oden.utexas.edu}).
  }
\DeclareMathOperator{\diag}{diag}
\newcommand{\comment}[1]{}
\DeclareMathOperator*{\clip}{clip}
\newcommand{\ub}[2]{\underbrace{#1}_{#2}}
\newcommand{\fnt}[1]{\bm{\mathsf{#1}}}
\newcommand{\vecf}[1]{\bm{#1}}
\newcommand{\vect}[1]{\bm{\mathsf{#1}}}
\newcommand{\real}{\mathbb{R}}
\renewcommand{\aligned}[1]{&#1}
\newcommand{\labell}[1]{\addtocounter{equation}{1}\tag{\theequation}\label{#1}}
\renewcommand{\hat}[1]{\widehat{#1}} 
\newcommand{\norm}[1]{\left\| #1 \right\|} 
\renewcommand{\d}[0]{{\rm d}}
\begin{document}

\maketitle

\begin{abstract}
\cite{Lin24} enforces a cell entropy inequality for nodal discontinuous Galerkin methods by combining flux corrected transport (FCT)-type limiting and a knapsack solver, which determines optimal limiting coefficients that result in a semi-discrete cell entropy inequality while preserving nodal bounds. In this work, we provide a slight modification of this approach, where we utilize a quadratic knapsack problem instead of a standard linear knapsack problem. We prove that this quadratic knapsack problem can be reduced to efficient scalar root-finding. Numerical results demonstrate that the proposed quadratic knapsack limiting strategy is efficient and results in a semi-discretization with improved regularity in time compared with linear knapsack limiting, while resulting in fewer adaptive timesteps in shock-type problems. 
\end{abstract}

\begin{keywords}
Discontinuous Galerkin, Spectral Element Methods, Entropy Stable, Knapsack Limiting, Cell Entropy Inequality
\end{keywords}

\begin{MSCcodes}
68Q25, 68R10, 68U05
\end{MSCcodes}

\section{Introduction}
\label{sec:Introduction}
A promising method that has emerged in the field of computational fluid dynamics (CFD) is the discontinuous Galerkin spectral element method (DGSEM) \cite{Kopriva10}. One of the primary advantages of DGSEM is its ability to achieve arbitrarily high spatial accuracy, making it a valuable tool for capturing fine details of complex systems. 
Despite its advantages, high order DGSEM is not without limitations, the most notable being a lack of robustness. High order methods often exhibit robustness issues or may fail to converge to accurate solutions. In response, entropy stability techniques, such as entropy stable discontinuous Galerkin (ESDG) methods, have been proposed to mitigate these issues \cite{Carpenter14, Gassner16, Chen17, Crean18, Chan18}. For example, Chan introduced a strategy where the method enforces a discrete entropy inequality in the numerical scheme, promoting stability under high order settings \cite{Chan18}. These papers utilize the same approach for constructing ESDG methods based on entropy-conservative volume fluxes and entropy-stable surface fluxes. These fluxes are designed to satisfy symmetry, consistency, and an entropy conservation condition specific to the PDE being solved. However, evaluating these fluxes can be computationally expensive, and deriving them can be complex \cite{Chandrashekar13,Ranocha21,Ranocha23}. 

Lin and Chan proposed an alternative method for constructing an entropy stable DG method \cite{Lin24}. By incorporating cell entropy inequality constraints at the element level and solving a linear knapsack optimization problem at each right-hand side evaluation, Lin’s approach identifies optimal flux-corrected transport (FCT)-type blending coefficients while ensuring entropy stability. The resulting scheme remains as close as possible to the original high order method while satisfying a local entropy inequality. Vilar further demonstrated that this blended scheme successfully retains high order accuracy while maintaining robustness, showing its potential as a reliable method for high-dimensional, complex systems \cite{Vilar25}. Moreover, this knapsack limiting approach naturally preserves limiting-enforced bound constraints, which can be computed using convex limiting techniques (see, for example, \cite{Pazner21,RuedaRamirez23}). 

In this work, we introduce a quadratic knapsack problem as the solution to the blended update. Unlike the standard linear knapsack problem, the quadratic knapsack problem is continuous with respect to the solution. We will show that, when compared against linear knapsack limiting, quadratic knapsack limiting results in a higher order of convergence in time for shock-type problems, and requires far fewer adaptive timesteps, especially for high orders of approximation in space. Lastly, we show numerically that knapsack limiting appears to overcome issues of linear instability found in \cite{Gassner21}, where non-physical oscillations emerge and are amplified over time. 

\section{Entropy Stable Discontinuous Galerkin (DG) Methods}

DG methods were first introduced by Reed and Hill \cite{Reed73}. DG methods are now popular among CFD applications, due to their arbitrarily high order of accuracy and easy parallelizability. The domain is partitioned into a non-overlapping set of elements. Over each element, the DG method behaves like a finite element method. Across boundaries, DG methods do not enforce continuity. Rather, DG methods couple neighboring elements together using surface fluxes. 

One particular form of DG method is the discontinuous Galerkin spectral element method (DGSEM) \cite{Kopriva10}. Over each element, the solution is approximated as a linear combination of degree $N$ Lagrange polynomials, based over Legendre-Gauss-Lobatto (LGL) quadrature nodes. The scheme utilizes collocation of the LGL nodes for its integral quadrature and interpolation. 

In this section, we start by introducing the discontinuous Galerkin spectral element method (DGSEM), along with summation-by-parts (SBP) operators. We then move on to the flux differencing scheme. Last, we describe a subcell low order method which will be used to stabilize the DGSEM solution. 

\subsection{Discontinuous Galerkin Spectral Element Method (DGSEM)}

We will now discuss the DGSEM discretization. For the sake of brevity, we will only describe how to construct summation-by-parts (SBP) operators in $d\in\{1,2\}$ dimensions, with the note that extension to higher dimensions is possible. All schemes will be written in such a way to accommodate an arbitrary number of dimensions.

We consider a hyperbolic conservation law imposed over domain $\Omega\subseteq\real^d$, with the imposed linear or nonlinear fluxes $\vecf{f}_k:\real^v\to\real^v$:
\begin{align}\frac{\partial\vecf{u}}{\partial t}+\sum_{k=1}^d \frac{\partial \vecf{f}_k\left(\vecf{u}\right)}{\partial x_k}=\vect{0},\label{eq:LitReviewConservationLaw}\end{align}
where $\vecf{u}:\real^{d}\times\real_{\geq 0}\to\real^{v}$ and $v$ is the number of variables. We partition $\Omega$ into equally sized elements $D^m$. In the case of one dimension, each $D^m$ is a segment, and two dimensions, $D^m$ are quadrilaterals. We define $\hat{D}$ as the \textit{reference element}, such that $\vecf{\Phi}^m:\hat{D}\to D^m$ is an invertible mapping. 

Over each $D^m$, integrals are discretized using quadrature rules with nodes, $\fnt{x}^m = \vecf{\Phi}^m(\hat{\vect{x}})$, where $\hat{\vect{x}}$ are nodes over $\hat{D}$. In the case of one dimension, $\hat{\vect{x}}$ are the $N+1$ Legendre-Gauss-Lobatto (LGL) nodes, and in two dimensions, it is the set of pairwise products of the same LGL nodes. We define $\ell_i^m:D^m\to \real$ the Lagrange basis polynomials based over the nodes $\vect{x}^m$, and $\ell_i:\hat{D}\to \real$ the Lagrange basis polynomials. 

The DGSEM discretization is chosen so that over each element, the estimated solution is a linear combination of the Lagrange basis polynomials. To do this, it asserts that the residual of the inner products of \eqref{eq:LitReviewConservationLaw} with each Lagrange basis polynomial vanishes over each element. Then, the conservation law can be expressed in its ``strong form'' \cite{Hesthaven07}: 
\begin{align}
\int _{D^m }^{ }\left(\frac{\partial \vecf{u}}{\partial t}+\sum_{k=1}^d\frac{\partial \vecf{f}_k\left(\vecf{u}\right)}{\partial x_k}\right)\ell_i^mdx+\int _{\partial D^m }^{ }\left(\vecf{f}^{\ast }\left(\vecf{u}^+,\vecf{u}, \hat{\vecf{n}}\right)-\vecf{f}\left(\vecf{u}\right)\cdot\hat{\vecf{n}}\right)\ell_i^mdx=\fnt{0},
\end{align}
where $\vecf{f}:\real^v\to\real^{v\times d}$ is a matrix-valued function, such that the $k$th column $(\vecf{f}(\vecf{u}))_k = \vecf{f}_k(\vecf{u})$, $\hat{\vecf{n}}$ denotes the outward normal on $\partial D^m$, $\vecf{u}^+$ denotes the corresponding solution on the neighboring element with $D^m$, and $\vecf{f}^\ast:\real^v\times\real^v\times\real^d\to\real^v$ describes a chosen surface flux along the normal direction. 
Then, expressing $\vecf{u} = \sum_i \vect{u}_i\ell_i^m(x)$ as a linear combination of the Lagrange polynomials, such that $\vect{u} = \vecf{u}(\vect{x}^m)$, and discretizing with the integral quadrature, the discretized form over element $D^m$ can be expressed by
\begin{align}
\fnt{M}\frac{\d\fnt{u}}{\d t}+\sum_{k=1}^d\fnt{Q}_k\vecf{f}_k\left(\fnt{u}\right)+\fnt{E}^T\left(\vecf{f}^{\ast }\left(\fnt{u_f}^+,\fnt{u_f},\hat{\fnt{n}}\right)-\vecf{f}\left(\fnt{u_f}\right)\cdot\hat{\fnt{n}}\right)=\fnt{0},
\label{eq:DiscreteDG}
\end{align}
where $\vect{u_f}=\vecf{u}(\vect{x_f})$, and $\vect{x_f}$ are the face nodes of $\vect{x}^m$. Here $\fnt{M}\in \real^{n\times n}$ is the mass matrix, $\fnt{E}$ is the boundary matrix, and $\fnt{Q}=\fnt{MD}_k$ where $\fnt{D}_k\in \real^{n\times n}$ are differentiation matrices, where $n=|\hat{\fnt{x}}|$ is the number of nodes within each element. $\fnt{M}$ and $\fnt{Q}_k$ are referred to as summation-by-parts operators, and in one dimension take the form
\begin{align}
    \fnt{M}^\text{1D} = \frac{h}{2} \cdot\text{diag}(\omega_1,...,\omega_{N+1}), && \fnt{D}^\text{1D}_{ij} = \ell_i'(x_j),
    &&
    \fnt{E}^\text{1D} = \begin{pmatrix}
        1 & \fnt{0}^T\\
        \fnt{0}^T & 1
    \end{pmatrix},
\end{align}
where $\omega_1,...,\omega_{N+1}$ are the corresponding quadrature weights for the LGL nodes $\hat{\fnt{x}}$. In two dimensions, they can be expressed using Kronecker products of their lower dimensional operators:
\begin{align}\begin{matrix}\fnt{M}=\fnt{M}_{\text{1D}}\otimes\fnt{M}_{\text{1D}}&\fnt{E}=\fnt{I}_{N+1}\otimes\fnt{E}_{\text{1D}}\\\fnt{Q}_1=\fnt{M}_{\text{1D}}\otimes\fnt{Q}_{\text{1D}}&\fnt{Q}_2=\fnt{Q}_{\text{1D}}\otimes\fnt{M}_{\text{1D}}\end{matrix}.\end{align}

It was shown that the resulting scheme is accurate up to order $\mathcal{O}(h^{N+1})$ in sufficiently regular cases \cite{Kopriva10}. Thus, this method can be used to construct arbitrarily high order accurate schemes. However, its solution is not always robust. Flux differencing can be used to alter the scheme \eqref{eq:DiscreteDG} into one that that satisfies a semi-discrete entropy inequality.

\subsection{Flux Differencing} \label{sec:FluxDiff}

In this section, we introduce the flux differencing form of DGSEM. Critical to its derivation are the summation-by-parts property $\fnt{Q}_k+\fnt{Q}_k^T=\fnt{E}^T\fnt{B}_k$ is diagonal, the conservation property $\fnt{Q}_k\fnt{1}=\fnt{0}$, and $\fnt{B}_k\vecf{f}_k(\fnt{u})=\vecf{f}_k(\fnt{u_f}) \hat{\fnt{n}}_k$, where
\begin{align*}
    \fnt{B}_\text{1D} = \begin{pmatrix}-1 & \fnt{0} \\ \fnt{0} & 1\end{pmatrix},
    &&
    \fnt{B}_1 = \fnt{M}_\text{1D} \otimes \fnt{B}_\text{1D},
    &&
    \fnt{B}_2 = \fnt{B}_\text{1D} \otimes \fnt{M}_\text{1D}.
\end{align*}
The first property is essentially a discrete form of integration by parts for degree $N$ polynomials. The second property arises from the fact that the derivative of a constant function is $\fnt{0}$. Since a constant is a degree $0$ polynomial, it is differentiated exactly by $\fnt{Q}_k$. Using these three properties, one can express
\begin{align}
    \sum_{k=1}^d(\fnt{Q}_k\vecf{f}_k(\vect{u}))_i - (\fnt{E}^T\vecf{f}(\fnt{u_f})\cdot \hat{\fnt{n}})_i = \sum_{k=1}^d\sum_j (\fnt{Q}_{k,ij} - \fnt{Q}_{k,ji})\frac{\vecf{f}_k(\vect{u}_i) + \vecf{f}_k(\vect{u}_j)}{2}\label{eq:CancelOut}.
\end{align}
This identity \eqref{eq:CancelOut} is used to remove the presence of the additional surface term in the surface flux from \eqref{eq:DiscreteDG}. Therefore, if a normal $\fnt{n}_{ij}$ is defined such that $\fnt{n}_{ij,k}=(\fnt{Q}_k - \fnt{Q}^T_k)_{ij}$, then
\begin{align}
    \sum_{k=1}^d\sum_j (\fnt{Q}_{k,ij} - \fnt{Q}_{k,ji})\frac{\vecf{f}_k(\vect{u}_i) + \vecf{f}_k(\vect{u}_j)}{2} =\sum _j^{ }\norm{\fnt{n}_{ij}}\left( \frac{\vecf{f}\left(\fnt{u}_i\right)+\vecf{f}\left(\fnt{u}_j\right)}{2}\cdot\frac{\fnt{n}_{ij}}{\norm{\fnt{n}_{ij}}}\right). \label{eq:NormalProof}
\end{align}
Importantly, $\vecf{f}^\text{central}(\fnt{u}_i, \fnt{u}_j, \hat{\fnt{n}}_{ij}) = \left(\frac{\vecf{f}\left(\fnt{u}_i\right)+\vecf{f}\left(\fnt{u}_j\right)}{2}\cdot\hat{\fnt{n}}_{ij}\right)$ is known as the central flux. Therefore, \eqref{eq:DiscreteDG} can be generalized to
\begin{align}
\fnt{M}\frac{\d \fnt{u}^H}{\d t}+\fnt{r}^H+ \fnt{E}^T\vecf{f}^{\ast }\left(\fnt{u_f}^+,\fnt{u_f}, \hat{\fnt{n}}\right)=\fnt{0}\label{eq:HighOrderScheme}\\
\fnt{r}^H_i = \sum _j^{ }\norm{\fnt{n}_{ij}}\vecf{f}^\text{vol}\left(\fnt{u}_i, \fnt{u}_j, \frac{\fnt{n}_{ij}}{\norm{\fnt{n}_{ij}}}\right)\nonumber,
\end{align}
where $\vecf{f}^\text{vol}$ is a suitable volume flux which satisfies
\begin{align}
    \vecf{f}^\text{vol}(\vect{u}, \vect{u}, \hat{\vect{n}}) &= \vecf{f}(\vect{u})\cdot\hat{\fnt{n}} && \text{Consistency}\nonumber\\
    \vecf{f}^\text{vol}(\vect{u}_i, \vect{u}_j, \hat{\vect{n}}_{ij}) &= -\vecf{f}^\text{vol}(\vect{u}_j, \vect{u}_i, -\hat{\vect{n}}_{ij}) && \text{Skew-Symmetry}\label{eq:VolumeFlux}.
\end{align}

If the volume flux is the central flux, the original DGSEM formulation is recovered. The properties of $\vecf{f}^\ast$ and $\vecf{f}^\text{vol}$ determine the properties of the resulting solution. If the volume flux is the central flux, we refer to the resulting scheme as DGSEM. If the volume flux is entropy conservative and the surface flux is entropy stable, the scheme satisfies an entropy inequality \cite{Chan18}. We refer to this scheme as entropy stable flux differencing (ESFD). Note that for the purposes of this work, unless otherwise specified, we will assume that the surface flux for DGSEM is the entropy stable, local Lax-Friedrichs flux
\begin{align}
    \vecf{f}^\text{LxF}(\fnt{u}_i, \fnt{u}_j, \hat{\fnt{n}}_{ij}) = \vecf{f}^\text{central}(\fnt{u}_i, \fnt{u}_j, \hat{\fnt{n}}_{ij})-\frac{\lambda}{2}(\fnt{u}_j - \fnt{u}_i),\label{eq:LaxFriedrichs}
\end{align}
where $\lambda$ is the maximum wavespeed of the 1D Riemann problem with $\fnt{u}_i$ and $\fnt{u}_j$.

Entropy conservative fluxes tend to be computationally expensive and complex \cite{Ranocha21, Ranocha23, Chandrashekar13}. Moreover, the use of entropy conservative volume fluxes in flux differencing appears to result in a locally linearly unstable scheme \cite{Gassner21}. This strategy is a state-of-the-art method for satisfying an entropy inequality by satisfying a cell entropy \textit{equality}. This works aims to investigate knapsack limiting \cite{Lin24}, where a cell-entropy \textit{inequality} is satisfied instead, while keeping the advantages of entropy stable flux differencing. We will use entropy stable flux differencing as a baseline to compare our resulting models. 

\subsection{Low Order Method}

Low order methods are often highly diffusive, stabilizing their respective simulations \cite{Hesthaven07}. We will blend in a low order method to stabilize our high order solution, in such a way that the resulting scheme will be accurate, conservative, and stable. 

The low order method can be interpreted both as a low order finite volume method and as a flux differencing scheme with the corresponding low order finite volume operators:
\begin{align}
    \fnt{M}^L = \fnt{M}, && \fnt{Q}^L_\text{1D} = \frac{1}{2}\begin{pmatrix}
        -1&1&&&\\
        -1&0&1\\
        &-1&0&1\\
        &&&\ddots
    \end{pmatrix},\\
\fnt{Q}_1^L=\fnt{I}_{N+1}\otimes \fnt{Q}_\text{1D}^L,&&\fnt{Q}_2^L=\fnt{Q}_\text{1D}^L\otimes\fnt{I}_{N+1}.
\end{align}

The scheme can then be expressed in terms of flux differencing, where
\begin{gather}
\fnt{M}\frac{\d \fnt{u}^L}{\d t}+\fnt{r}^L+\fnt{E}^T\vecf{f}^{\ast }\left(\fnt{u_f}^+,\fnt{u_f}, \hat{\fnt{n}}\right)=\fnt{0}\label{eq:LowOrderScheme},\\
\fnt{r}_i^L = \sum _j^{ }\norm{\fnt{n}^L_{ij}}\vecf{f}^\text{LxF}\left(\fnt{u}_i, \fnt{u}_j, \frac{\fnt{n}^L_{ij}}{\norm{\fnt{n}^L_{ij}}}\right), \quad \fnt{n}^L_{ij,k}=(\fnt{Q}_k^L-(\fnt{Q}_k^L)^T)_{ij}\nonumber,
\end{gather}
where both $\vecf{f}^\text{LxF}$ and $\vecf{f}^\ast$ are the local Lax-Friedrichs flux. The resulting scheme is equivalent to the low order scheme in \cite{Pazner21}, though with different notation. 
Moreover, the resulting scheme is provably entropy stable without the use of entropy conservative volume fluxes, and it is provably positivity preserving for the compressible Euler and Navier-Stokes equations under appropriate choices of the wavespeed $\lambda$ \cite{Lin23}.

\subsection{Entropy Stability and a Cell Entropy Inequality}

Entropy stable schemes are numerical methods which satisfy a form of an entropy inequality. The inequality is derived so that a discrete form of the 2nd law of thermodynamics is satisfied by the scheme. The entropy inequality is a generalization of an energy inequality for nonlinear conservation laws. Assuming admissible quantities, such as positive pressure and density for the compressible Euler equations, a scheme which satisfies a (semi)-discrete entropy inequality typically behaves much more robustly. The result is a solution which dissipates mathematical entropy over time, which is often sufficient to stabilize the solution without relying on heuristic stabilization techniques.

We are particularly interested in equations which satisfy an entropy inequality:
\begin{align*}\frac{\partial\eta\left(\vecf{u}\right)}{\partial t}+\sum_{k=1}^{d}\frac{\partial F_k\left(\vecf{u}\right)}{\partial x_{k}}\le0\labell{eq:EntropyInequality},\end{align*}
where $\eta:\mathbb{R}^{v}\to\mathbb{R}$ is called the \textit{entropy function}, and $F_1,...,F_d:\mathbb{R}^{v}\to\mathbb{R}$ are called the \textit{entropy fluxes}. In particular, we choose $\eta$ and $F_k$ such that $\left(\eta,F_k\right)$ are a \textit{convex entropy, entropy-flux pair}. Here, $\eta$ is convex, and
\begin{align*}\frac{\partial F_k}{\partial \vecf{u}} = \vecf{v}^T \frac{\partial \vecf{f}_k}{\partial \vecf{u}}\labell{eq:EntropyRule},\end{align*}
where $\vecf{v}=\nabla_{\vecf{u}}\eta$ are referred to as the \textit{entropy variables}. We will also define $\vecf{F}:\real^v\to\real^d$ such that $\vecf{F}(\vecf{u})_k=F_k(\vecf{u})$, where
$F_k$ are given by
\begin{align*}F_k\left(\vecf{u}\right)=\vecf{v}^{T}\vecf{f}_k\left(\vecf{u}\right)-\psi_k\left(\vecf{u}\right)\labell{eq:EntropyPotential},\end{align*}
where $\psi_1,...,\psi_d:\real^v\to\real$ are called the \textit{entropy potentials} \cite{Godlewski13}. We will also define $\vecf{\psi}:\real^v\to\real^d$ such that $\vecf{\psi}(\vecf{u})_k=\psi_k(\vecf{u})$. A \textit{cell entropy inequality} can be derived by integrating \eqref{eq:EntropyInequality} over each element:
\begin{align*}\int_{D^{m}}^{ }\frac{\partial\eta\left(\vecf{u}\right)}{\partial t}d\vecf{x}+\int_{D^{m}}^{ }\sum_{k=1}^{d}\frac{\partial F_k\left(\vecf{u}\right)}{\partial x_{k}}d\vecf{x}\le0.\end{align*}
In this work we will enforce the following discrete version of the continuous cell entropy inequality, which one can show is formally high order accurate using the chain rule and accuracy of the Gauss-Lobatto quadrature:
\begin{align*}\fnt{v}^{T}\fnt{M}\frac{\d \fnt{u}}{\d t}+\fnt{1}^T\fnt{E}^{T}\left(\fnt{v_f}^T\vecf{f}^\ast(\fnt{u_f}^+,\fnt{u_f},\hat{\fnt{n}})-\vecf{\psi}(\fnt{u_f})\cdot \hat{\fnt{n}}\right)\le0\labell{eq:DiscreteCellEntropyInequality},\end{align*}
where $\fnt{v}=\vecf{v}\left(\fnt{u}\right)$ and $\fnt{v_f}=\vecf{v}(\vect{u_f})$. In particular, using collocated Gauss-Lobatto quadrature, we can discretize the rate of change of entropy as follows:
\begin{align*}\fnt{v}^{T}\fnt{M}\frac{\d \fnt{u}}{\d t}=\sum_{i}^{ }\omega_{i}\frac{\partial\eta\left(\fnt{x}_{i}\right)}{\partial t}\approx\int_{D^{m}}^{ }\frac{\partial\eta\left(\vecf{u}\right)}{\partial t}d\vecf{x}.\end{align*}

\section{Subcell Limiting for Enforcing a Cell Entropy Inequality}

In this section, we will discuss the blending of the high and low order updates $\frac{\d \fnt{u}^{H}}{\d t}$ from \eqref{eq:HighOrderScheme} and $\frac{\d\fnt{u}^{L}}{\d t}$ from \eqref{eq:LowOrderScheme}, into a blended scheme in flux differencing form:
\begin{align*}\fnt{M}\frac{\d\fnt{u}}{\d t}+\fnt{r}\left(\fnt{u}\right)+\fnt{E}^T\vecf{f}^{\ast }\left(\fnt{u_f}^+,\fnt{u_f}, \hat{\fnt{n}}\right)=0\labell{eq:BlendedScheme},\end{align*}
for some volume term $\fnt{\fnt{r}\left(\fnt{u}\right)}$, such that the resulting blended scheme is high order in sufficiently regular cases, locally conservative, positivity preserving, and satisfies a cell entropy inequality. We will next discuss linear knapsack limiting \cite{Lin24}, the current state-of-the-art strategy for determining optimal blending coefficients. Last, we introduce \textit{quadratic} knapsack limiting, the primary subject of this paper. We will prove that the (continuous) quadratic knapsack problem, the driver behind quadratic knapsack limiting, can be solved using an efficient scalar quasi-Newton iteration with finitely many iterations.

First, we derive a \textit{volume form} of the semi-discrete cell entropy inequality \eqref{eq:DiscreteCellEntropyInequality}, which will describe for what volume terms $\fnt{r}$ the scheme \eqref{eq:BlendedScheme} satisfies the cell entropy inequality. This inequality will aid in the blending strategy. By replacing $\fnt{M}\frac{\d \fnt{u}}{\d t}$ with its definition in \eqref{eq:BlendedScheme}, we obtain:
\begin{align*}
-\fnt{v}^{T}\fnt{r}-\fnt{1}^T\fnt{E}^T(\vecf{\psi}(\fnt{u_f})\cdot \hat{\fnt{n}})\\
+\fnt{1}^T\fnt{E}^T(\fnt{v_f}^T \vecf{f}^\ast(\fnt{u_f}^+,\fnt{u_f},\hat{\fnt{n}}))-\fnt{v}^T\fnt{E}^T\vecf{f}^\ast(\fnt{u_f}^+,\fnt{u_f},\hat{\fnt{n}})\aligned{\le}0\labell{eq:CEILongForm}.\end{align*}
By utilizing the property of the boundary matrix $\fnt{E}$ that $\fnt{E}_{ij}^T = \delta (\fnt{x_f}_{,j}, \fnt{x}_i)$, we can simplify this expression to
\begin{align*}-\fnt{v}^{T}\fnt{r}-\fnt{1}^T\fnt{E}^T(\vecf{\psi}(\fnt{u_f})\cdot \hat{\fnt{n}})\le0\labell{eq:CEI}.\end{align*}
For sufficiently regular solutions, Vilar showed in \cite{Vilar25} that the high order scheme \eqref{eq:HighOrderScheme} violates the cell entropy inequality \eqref{eq:CEI} with a violation proportional to $\mathcal{O}\left(h^{N+1}\right)$. This bound was sharpened in \cite{JesseAV}. Moreover, the low order method \eqref{eq:LowOrderScheme} provably satisfies the cell entropy inequality. Therefore, if we blend the high order scheme towards the low order scheme in order to satisfy the cell entropy inequality, the resulting blended scheme will hopefully retain high order accuracy in sufficiently regular cases, while also satisfying the cell entropy inequality. Moreover, the provable positivity properties of the low order scheme can be baked into the blending strategy to ensure that the blended scheme satisfies positivity. 

\subsection{Subcell Blending/Limiting}

In this section, we discuss strategies for blending high order DGSEM \eqref{eq:HighOrderScheme} toward the low order method \eqref{eq:LowOrderScheme} in order to satisfy the volume form of the cell entropy inequality \eqref{eq:CEI}, while preserving local conservation and provable positivity. We say that a scheme with volume term $\fnt{r}$ is \textit{locally conservative} if the sum of the components $\fnt{1}^{T}\fnt{r}=\fnt{0}\in\mathbb{R}^{v}$. This will ensure that the variables in the conservation law remain discretely conserved. Before proceeding, it is important to note that both the high and low order methods \eqref{eq:HighOrderScheme} and \eqref{eq:LowOrderScheme} are locally conservative due to the skew-symmetry property of the volume fluxes \eqref{eq:VolumeFlux}, and the skew-symmetry of $\fnt{Q}_k-\fnt{Q}_k^T$ and $\fnt{Q}_k^L-(\fnt{Q}_k^L)^T$.

 We want to blend these volume terms together such that $\fnt{r}$ remains locally conservative. To do this, we introduce operators $\fnt{\Delta}\in\mathbb{R}^{n\times L}$ and $\fnt{R}\in\mathbb{R}^{L\times n}$ such that
\begin{align*}\left(\fnt{\Delta R}-\fnt{I}_{n}\right)\fnt{D}\aligned{=}0&\text{Feasibility/Consistency}\\
\diag\left(\fnt{\Delta}^{T}\fnt{1}\right)\fnt{RD}\aligned{=}0\labell{eq:SLO}&\text{Conservation},\end{align*}
where $\fnt{D}\in\mathbb{R}^{n\times\left(n-1\right)}$ is the transpose of the differencing operator. Importantly, $\mathcal{R}\left(\fnt{D}\right)=\mathcal{N}\left(\fnt{1}^{T}\right)$. If $\fnt{\Delta}$ and $\fnt{R}$ satisfy the properties in \eqref{eq:SLO}, we call them \textit{subcell limiting operators}. One example of a set of valid subcell limiting operators, the ones used in the numerical section of this paper, are
\begin{align*}\fnt{\Delta}=\begin{pmatrix}-1&1&&&\\
&-1&1&&\\
&&\ddots&\ddots&\\
&&&-1&1\end{pmatrix}\in \real^{n\times (n+1)},&&\fnt{R}=\begin{pmatrix}0&&&&\\
1&0&&&\\
1&1&0&&\\
1&1&1&&\\
&&&\ddots&\\
1&1&1&\dots&1\end{pmatrix}\in\mathbb{R}^{\left(n+1\right)\times n}\labell{eq:BasicSLO}.\end{align*}
These operators follow the definition in \eqref{eq:SLO}. The corresponding $\fnt{\Delta}$ and $\fnt{R}$ mimic an elementwise differencing operation and a cumulative sum operation respectively. To apply the blending between the high and low order volume terms, we define the \textit{blending coefficients} $\fnt{\theta}\in\left[0,1-\fnt{\ell^{c}}\right]$, where $\fnt{\ell^{c}}\in\left[0,1\right]^{L}$ are called the \textit{limiting coefficients}. The blending coefficients parameterize the blending, and are allowed to vary elementwise between $\fnt{0}$ and $1-\fnt{\ell^{c}}$. The limiting coefficients $\fnt{\ell^{c}}$ are chosen so that the blended scheme satisfies a positivity condition. Then, the blending is performed as
\begin{align*}\fnt{r}\left(\fnt{\theta}\right)=\fnt{r}^{H}+\fnt{\Delta}\diag\left(\fnt{\theta}+\fnt{\ell^{c}}\right)\fnt{R}\left(\fnt{r}^{L}-\fnt{r}^{H}\right)\labell{eq:Blending},\end{align*}
where $\fnt{r}\left(\fnt{\theta}\right)=\fnt{r}\left(\fnt{u},\fnt{\theta}\right)$ but we have dropped $\fnt{u}$ for simplicity of notation. Note that the blending above, when coupled with the operators in \eqref{eq:BasicSLO} defines subcell limiting as in \cite{Pazner21, RuedaRamirez23}. The definition of subcell limiting operators in \eqref{eq:SLO} is chosen such that the blended scheme \eqref{eq:Blending} satisfies local conservation for any set of blending coefficients and any set of locally conservative, high and low order volume terms. It also ensures that the low order volume term remains feasible.

\begin{theorem}
    For any locally conservative $\fnt{r}^L$ and $\fnt{r}^H$, and for any blending coefficients $\fnt{\theta}\in [0,1-\fnt{\ell^c}]$, $\fnt{r}(\fnt{\theta})$ is locally conservative.
\end{theorem}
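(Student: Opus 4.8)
The plan is to compute $\fnt{1}^T \fnt{r}(\fnt{\theta})$ directly from the blending formula \eqref{eq:Blending} and show every term vanishes. First I would write
\[
\fnt{1}^T \fnt{r}(\fnt{\theta}) = \fnt{1}^T \fnt{r}^H + \fnt{1}^T \fnt{\Delta}\diag(\fnt{\theta}+\fnt{\ell^c})\fnt{R}(\fnt{r}^L - \fnt{r}^H).
\]
The first term is $\fnt{0}$ by the hypothesis that $\fnt{r}^H$ is locally conservative. For the second term, the key observation is that the conservation property in \eqref{eq:SLO}, namely $\diag(\fnt{\Delta}^T\fnt{1})\fnt{R}\fnt{D} = 0$, controls exactly the piece of $\fnt{R}(\fnt{r}^L - \fnt{r}^H)$ that matters.

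The main step is to argue that $\fnt{r}^L - \fnt{r}^H$ lies in the range of $\fnt{D}$. Since both $\fnt{r}^L$ and $\fnt{r}^H$ are locally conservative, $\fnt{1}^T(\fnt{r}^L - \fnt{r}^H) = \fnt{0}$ componentwise in the $v$ conserved variables, so each column of $\fnt{r}^L - \fnt{r}^H$ lies in $\mathcal{N}(\fnt{1}^T)$. The excerpt states $\mathcal{R}(\fnt{D}) = \mathcal{N}(\fnt{1}^T)$, so we may write $\fnt{r}^L - \fnt{r}^H = \fnt{D}\fnt{w}$ for some $\fnt{w}$ (columnwise). Then
\[
\fnt{1}^T \fnt{\Delta}\diag(\fnt{\theta}+\fnt{\ell^c})\fnt{R}(\fnt{r}^L - \fnt{r}^H) = \fnt{1}^T \fnt{\Delta}\diag(\fnt{\theta}+\fnt{\ell^c})\fnt{R}\fnt{D}\fnt{w}.
\]
Now I would use that $\fnt{1}^T\fnt{\Delta}\diag(\fnt{\theta}+\fnt{\ell^c}) = (\fnt{\Delta}^T\fnt{1})^T\diag(\fnt{\theta}+\fnt{\ell^c}) = (\diag(\fnt{\Delta}^T\fnt{1})(\fnt{\theta}+\fnt{\ell^c}))^T$, and since $\diag(\fnt{\Delta}^T\fnt{1})$ commutes with the diagonal $\diag(\fnt{\theta}+\fnt{\ell^c})$, this equals $(\fnt{\theta}+\fnt{\ell^c})^T\diag(\fnt{\Delta}^T\fnt{1})$. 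Hence the term becomes $(\fnt{\theta}+\fnt{\ell^c})^T \diag(\fnt{\Delta}^T\fnt{1})\fnt{R}\fnt{D}\fnt{w}$, which is $\fnt{0}$ by the conservation property in \eqref{eq:SLO}. Combining, $\fnt{1}^T\fnt{r}(\fnt{\theta}) = \fnt{0}$, so $\fnt{r}(\fnt{\theta})$ is locally conservative, with no restriction on $\fnt{\theta}$ beyond it being a valid (finite) vector.

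The only delicate point is the bookkeeping with the $v$ conserved variables: $\fnt{r}^H$, $\fnt{r}^L$, and $\fnt{r}(\fnt{\theta})$ are really $n\times v$ arrays (one scalar residual per node per variable), and "locally conservative" means $\fnt{1}^T$ applied on the nodal index gives the zero vector in $\real^v$. All the matrices $\fnt{\Delta}$, $\fnt{R}$, $\fnt{D}$, and the diagonal scalings act only on the nodal index, so the argument above goes through columnwise (i.e., variable by variable), and the factorization $\fnt{r}^L-\fnt{r}^H = \fnt{D}\fnt{w}$ is applied to each of the $v$ columns separately. I expect the identity $\mathcal{R}(\fnt{D}) = \mathcal{N}(\fnt{1}^T)$ together with the commuting-diagonals manipulation to be the crux; everything else is a one-line substitution. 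No use of the feasibility/consistency property is needed for this particular claim, though I would remark that it is what guarantees the low order term itself is representable in this blending form.
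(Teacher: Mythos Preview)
Your proposal is correct and follows essentially the same argument as the paper: both compute $\fnt{1}^T\fnt{r}(\fnt{\theta})$, drop $\fnt{1}^T\fnt{r}^H$ by hypothesis, rewrite $\fnt{1}^T\fnt{\Delta}\diag(\fnt{\theta}+\fnt{\ell^c})$ as $(\fnt{\theta}+\fnt{\ell^c})^T\diag(\fnt{\Delta}^T\fnt{1})$, factor $\fnt{r}^L-\fnt{r}^H=\fnt{D}\tilde{\fnt{r}}$ via $\mathcal{R}(\fnt{D})=\mathcal{N}(\fnt{1}^T)$, and apply the conservation property in \eqref{eq:SLO}. Your added remarks on the columnwise treatment of the $v$ variables and the irrelevance of the feasibility property for this claim are accurate and go slightly beyond what the paper spells out.
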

\begin{proof}
    To show $\fnt{r}(\fnt{\theta})$ is locally conservative, we take its component sum:
    \begin{align*}\fnt{1}^{T}\fnt{r}\left(\fnt{\theta}\right)\aligned{=}\ub{\fnt{1}^{T}\fnt{r}^{H}}{0}+\fnt{1}^{T}\fnt{\Delta}\diag\left(\fnt{\theta}+\fnt{\ell^{c}}\right)\fnt{R}\left(\fnt{r}^{L}-\fnt{r}^{H}\right)\\
\aligned{=}\left(\fnt{\theta}+\fnt{\ell^{c}}\right)^{T}\diag\left(\fnt{\Delta}^{T}\fnt{1}\right)\fnt{R}\left(\fnt{r}^{L}-\fnt{r}^{H}\right).\end{align*}
Since $\fnt{1}^{T}\left(\fnt{r}^{L}-\fnt{r}^{H}\right)=\fnt{0}$, $\fnt{r}^{L}-\fnt{r}^{H}\in\mathcal{N}\left(\fnt{1}^{T}\right)=\mathcal{R}\left(\fnt{D}\right)$. Therefore, $\fnt{r}^{L}-\fnt{r}^{H}=\fnt{D}\tilde{\fnt{r}}$ for some $\tilde{\fnt{r}}\in\mathbb{R}^{n-1}$. So,
\begin{align*}\fnt{1}^{T}\fnt{r}\left(\fnt{\theta}\right)=\left(\fnt{\theta}+\fnt{\ell^{c}}\right)^{T}\ub{\diag\left(\fnt{\Delta}^{T}\fnt{1}\right)\fnt{R}\fnt{D}}{0}\tilde{\fnt{r}}=\fnt{0}.\end{align*}
\end{proof}

We can also express the cell entropy inequality \eqref{eq:CEI} in terms of the blending coefficients by substitution:
\begin{align*}-\fnt{v}^{T}\fnt{r}\left(\fnt{\theta}\right)-\fnt{1}^T\fnt{E}^T(\vecf{\psi}(\fnt{u_f})\cdot \hat{\fnt{n}})\aligned{\le}0\\
\iff-\fnt{v}^{T}\left(\fnt{r}^{H}+\fnt{\Delta}\diag\left(\fnt{\theta}+\fnt{\ell^{c}}\right)\fnt{R}\left(\fnt{r}^{L}-\fnt{r}^{H}\right)\right)\aligned{\le}\fnt{1}^T\fnt{E}^T(\vecf{\psi}(\fnt{u_f})\cdot \hat{\fnt{n}})\\
\iff-\fnt{v}^{T}\fnt{\Delta}\diag\left(\fnt{\theta}+\fnt{\ell^{c}}\right)\fnt{R}\left(\fnt{r}^{L}-\fnt{r}^{H}\right)\aligned{\le}\fnt{1}^T\fnt{E}^T(\vecf{\psi}(\fnt{u_f})\cdot \hat{\fnt{n}})+\fnt{v}^{T}\fnt{r}^{H}\\
\iff-\fnt{v}^{T}\fnt{\Delta}\diag\left(\fnt{R}\left(\fnt{r}^{L}-\fnt{r}^{H}\right)\right)\left(\fnt{\theta}+\fnt{\ell^{c}}\right)\aligned{\le}\fnt{1}^T\fnt{E}^T(\vecf{\psi}(\fnt{u_f})\cdot \hat{\fnt{n}})+\fnt{v}^{T}\fnt{r}^{H}\\
\iff\fnt{a}^T\fnt{\theta} &\geq b\labell{eq:BlendedCEI},\end{align*}
where $\fnt{a}=\diag\left(\fnt{R}\left(\fnt{r}^{L}-\fnt{r}^{H}\right)\right)\fnt{\Delta}^{T}\fnt{v}$ and $b=-\fnt{1}^T\fnt{E}^T(\vecf{\psi}(\fnt{u_f})\cdot \hat{\fnt{n}})-\fnt{v}^{T}\fnt{r}^{H}-\fnt{a}^{T}\fnt{\ell^{c}}$.

\subsection{Knapsack Limiting}

In this section, we discuss knapsack limiting for determining optimal blending coefficients that ensure that the blended scheme \eqref{eq:BlendedScheme} is high order accurate in sufficiently regular cases, satisfies the cell entropy inequality $\fnt{a}^{T}\fnt{\theta}\ge b$, and satisfies a positivity condition governed by $0\le\fnt{\theta}\le1-\fnt{\ell^{c}}$. We will introduce linear knapsack limiting as in \cite{Lin24}, using it to motivate quadratic knapsack limiting. We will then show that the solution to the quadratic knapsack problem can be solved using an efficient, scalar quasi-Newton iteration with finitely many iterations.

It can be seen in \eqref{eq:Blending} that as the elements of $\fnt{\theta}$ approach 0, the blended volume term moves towards the high order volume term. Ideally, we want the blended term $\fnt{r}\left(\fnt{\theta}\right)$ to be as close to the high order volume term as possible, subject to the cell entropy inequality and positivity constraints being satisfied. Therefore, we want to minimize the blending coefficients in some sense, subject to $\fnt{a}^{T}\fnt{\theta}\ge b$ and $0\le\fnt{\theta}\le1-\fnt{\ell^{c}}$. \textit{Linear knapsack limiting} \cite{Lin24} sets the blending coefficients as the solution to the continuous knapsack problem:
\begin{align*}\min_{\begin{matrix}\fnt{a}^{T}\fnt{\theta}\ge b\\
0\le\fnt{\theta}\le1-\fnt{\ell^{c}}\end{matrix}}\fnt{1}^{T}\fnt{\theta}\labell{eq:LinearKnapsackProblem}.\end{align*}
Since the blending coefficients are constraint to nonnegativity, the objective function amounts to an $L^{1}$ norm, as in $\fnt{1}^{T}\fnt{\theta}=\left\lVert\fnt{\theta}\right\rVert_{1}$. The linear knapsack problem was chosen due to the existence of an efficient greedy algorithm for its solution.\comment{, shown in Algorithm \eqref{alg:algorithm1}.

\begin{algorithm}
\caption{Continuous knapsack problem solution}
\label{alg:algorithm1}
\begin{algorithmic}
\REQUIRE $\fnt{a}$, $b$, $\fnt{\ell^{c}}$
\ENSURE Optimal $\fnt{\theta}$

\STATE Initialize $\fnt{\theta}\gets\fnt{0}$
\STATE \textcolor{red}{Sort items $\fnt{a}_{i}$ in descending order}
\FOR{each item $i$ in sorted order}
    \IF{$\fnt{a}_{i}\ge0$ and $b>0$}
        \STATE Assign maximum feasible $\fnt{\theta}_{i}=\min\left(1-\fnt{\ell^{c}}_{i},\frac{b}{\fnt{a}_{i}}\right)$
        \STATE Update remaining capacity $b\gets b-\fnt{a}_{i}\fnt{\theta}_{i}$
    \ELSE
        \STATE Break
    \ENDIF
\ENDFOR

\RETURN $\fnt{\theta}$
\end{algorithmic}
\end{algorithm}

} Moreover, it was shown theoretically in \cite{Vilar25} that the resulting scheme preserves high order accuracy $\mathcal{O}\left(h^{N+1}\right)$ in sufficiently smooth regions. However, the solution to \eqref{eq:LinearKnapsackProblem} is not continuous as a function of the components of the parameter vector $\fnt{a}$, due to the presence of sorting in its solution algorithm. In Figure \eqref{fig:L1Plots}, the components of the solution $\fnt{\theta}\in\mathbb{R}^{2}$ to a two-dimensional continuous knapsack problem are shown as a function of the components of the vector $\fnt{a}\in\mathbb{R}^{2}$. It can be observed that a discontinuity is formed along $\fnt{a}_{1}=\fnt{a}_{2}$, where the ordering of the vector $\fnt{a}$ changes. Because the solution of the linear knapsack problem is incorporated into the volume term of the blended solution \eqref{eq:Blending}, the discontinuity can pollute the (temporal) accuracy of the resulting scheme. As we will show, this discontinuity results in a reduced order of time convergence in cases where shock is present in the solution. We will also show numerically that the adaptive timestep count is much larger than that for ESFD.

\begin{figure}%
    \centering
    \subfloat[\centering First component]{{\includegraphics[width=6cm]{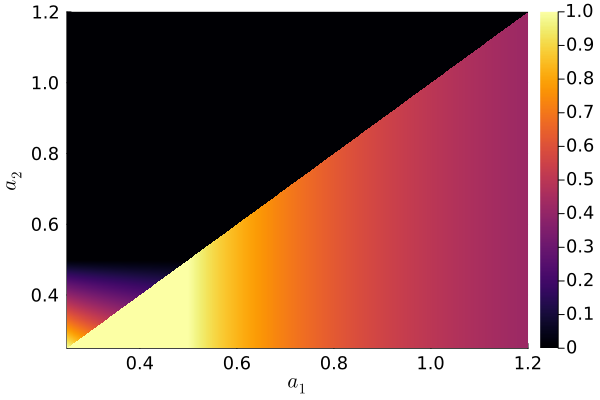} }}%
    \quad
    \subfloat[\centering Second component]{{\includegraphics[width=6cm]{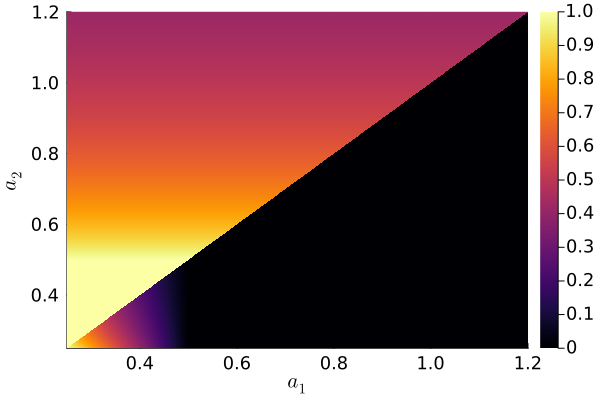} }}%
    \caption{First and second components of the solution to a two-dimensional continuous knapsack problem with weight vector $\fnt{a}$ and right-hand-side $b$.}%
    \label{fig:L1Plots}%
\end{figure}
Consider now the (continuous) \textit{quadratic} knapsack problem, which replaces the $L^{1}$ norm in the objective function with an $L^{2}$ norm:
\begin{align*}\min_{\begin{matrix}\fnt{a}^{T}\fnt{\theta}\ge b\\0\le\fnt{\theta}\le1-\fnt{\ell^{c}}\end{matrix}}
\fnt{\theta}^{T}\fnt{\theta}\labell{eq:QKP}.\end{align*}
The solution to \eqref{eq:QKP} is continuous as a function of the input parameters. This is due to the fact that the square of the $L^{2}$ norm is differentiable. This is opposed to the $L^{1}$ norm objective function for the linear knapsack problem \eqref{eq:LinearKnapsackProblem}, which is differentiable only when every component of its parameter vector is nonzero. In Figure \eqref{fig:L2Plots}, the components of the solution $\fnt{\theta}\in\mathbb{R}^{2}$ to a two-dimensional quadratic knapsack problem are shown as a function of the components of the vector $\fnt{a}\in\mathbb{R}^{2}$. It can be observed that each component is continuous as a function of $\fnt{a}$.

\begin{figure}%
    \centering
    \subfloat[\centering First component]{{\includegraphics[width=6cm]{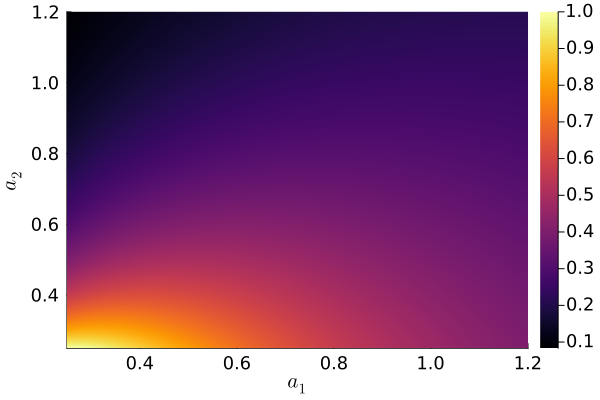} }}%
    \quad
    \subfloat[\centering Second component]{{\includegraphics[width=6cm]{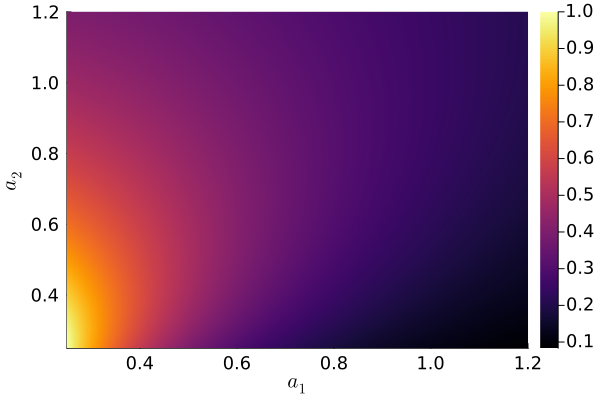} }}%
    \caption{First and second components of the solution to a two-dimensional quadratic knapsack problem with weight vector $\fnt{a}$ and right-hand-side $b$.}%
    \label{fig:L2Plots}%
\end{figure}

For the remainder of this section, we prove that the solution to the quadratic knapsack problem can be found as the solution to a quasi-Newton iteration with finitely many iterations. First, note that if $b \leq 0$, $\fnt{\theta} = \fnt{0}$ is optimal for \eqref{eq:QKP}. This is due to the fact that if $b \leq 0$, $\fnt{\theta} = \fnt{0}$ is feasible. Therefore, since $\fnt{0}$ minimizes the objective function $\norm{\fnt{\theta}}_2^2$, we have that it also optimizes \eqref{eq:QKP}. For this reason, the only interesting case is when $b > 0$.

To formulate the solution to the quadratic knapsack problem as a quasi-Newton iteration, we first define a clipping function, where for arbitrary vectors $\fnt{x}, \fnt{y}, \fnt{z}\in \real^L$, $\clip_{\fnt{y},\fnt{z}}(\fnt{x})$ clips the components of $\fnt{x}$ elementwise between the components of $\fnt{y}$ and $\fnt{z}$. 

Define $f:\mathbb{R}_{\ge0}\to\mathbb{R}$ under $f\left(\lambda\right)=\fnt{a}^{T}\clip_{0,1 - \fnt{\ell^c}}(\lambda \fnt{a})-b$. Note then that $f$ is continuous and piecewise linear with finitely many points of nondifferentiability. This follows immediately from the facts that $\clip_{i}$ is also continuous and piecewise linear with at most two points of nondifferentiability, and $f$ is a linear combination of $\clip_{i}$ functions. Importantly, one may also show using properties of $\clip$ that $f$ is right-differentiable, with right-derivative $\delta f:\mathbb{R}_{\ge0}\to\mathbb{R}$ under
\begin{align*}\delta f\left(\lambda\right)=\sum_{0<\fnt{a}_{i}\ \text{and}\ \lambda\fnt{a}_{i}<1-\fnt{\ell^{c}}_{i}}^{ }\fnt{a}_{i}^{2}.\end{align*}

\begin{lemma}
    $f$ is nondecreasing and concave.
\end{lemma}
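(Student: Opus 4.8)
The plan is to analyze $f(\lambda) = \fnt{a}^T \clip_{0,1-\fnt{\ell^c}}(\lambda\fnt{a}) - b$ componentwise, writing $f(\lambda) = \sum_i g_i(\lambda) - b$ where $g_i(\lambda) = \fnt{a}_i \cdot \clip_{0,1-\fnt{\ell^c}_i}(\lambda \fnt{a}_i)$. Since a sum of nondecreasing concave functions is nondecreasing and concave, and subtracting the constant $b$ changes neither property, it suffices to show each $g_i$ is nondecreasing and concave on $\mathbb{R}_{\ge 0}$.

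First I would dispatch the easy cases. If $\fnt{a}_i = 0$, then $g_i \equiv 0$, which is trivially nondecreasing and concave. If $\fnt{a}_i < 0$, then $\lambda\fnt{a}_i \le 0$ for $\lambda \ge 0$, so the clip pins the argument at $0$ (note $0 \le 1-\fnt{\ell^c}_i$), giving $g_i \equiv 0$ again. The only substantive case is $\fnt{a}_i > 0$. Here, as $\lambda$ increases from $0$, the quantity $\lambda\fnt{a}_i$ increases from $0$; it lies below the cap $1-\fnt{\ell^c}_i$ until $\lambda = (1-\fnt{\ell^c}_i)/\fnt{a}_i =: \lambda_i^*$, after which it is clipped to $1-\fnt{\ell^c}_i$. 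Thus
\begin{align*}
g_i(\lambda) = \begin{cases} \lambda \fnt{a}_i^2, & 0 \le \lambda \le \lambda_i^*,\\ \fnt{a}_i(1-\fnt{\ell^c}_i), & \lambda > \lambda_i^*.\end{cases}
\end{align*}
This is continuous (the two pieces agree at $\lambda_i^*$), nondecreasing (slope $\fnt{a}_i^2 \ge 0$ on the first piece, slope $0$ on the second), and concave (piecewise linear with nonincreasing slope, $\fnt{a}_i^2 \ge 0$ followed by $0$). If $1 - \fnt{\ell^c}_i = 0$ the first piece is degenerate and $g_i \equiv 0$, still fine.

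Having established that each $g_i$ is nondecreasing and concave, I would conclude by noting closure of these properties under nonnegative (here, unit-coefficient) sums and under subtraction of a constant: $f = \sum_i g_i - b$ is therefore nondecreasing and concave on $\mathbb{R}_{\ge 0}$. The one point requiring a little care — essentially the only obstacle — is verifying that the cap $1-\fnt{\ell^c}_i$ is indeed $\ge 0$ so that the clip interval $[0, 1-\fnt{\ell^c}_i]$ is nonempty and the pin-at-$0$ behavior in the $\fnt{a}_i \le 0$ case is correct; this is immediate from $\fnt{\ell^c} \in [0,1]^L$. Everything else is the routine observation that a clipped linear function has the "ramp then plateau" shape drawn above, which matches the earlier claim that $\delta f(\lambda) = \sum_{0 < \fnt{a}_i,\ \lambda\fnt{a}_i < 1-\fnt{\ell^c}_i} \fnt{a}_i^2$ is a nonnegative, nonincreasing step function — precisely the right-derivative of a nondecreasing concave function.
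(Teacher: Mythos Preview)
Your proof is correct and is essentially the same as the paper's, which argues directly from the already-stated right-derivative formula $\delta f(\lambda)=\sum_{0<\fnt{a}_i,\ \lambda\fnt{a}_i<1-\fnt{\ell^{c}}_i}\fnt{a}_i^2$: this filtered sum is nonnegative (so $f$ is nondecreasing) and loses terms as $\lambda$ increases (so $\delta f$ is nonincreasing and $f$ is concave). Your componentwise decomposition into ramp-then-plateau pieces $g_i$ is just a slightly more explicit packaging of the same observation.
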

\begin{proof}
    We have that $f$ is everywhere right-differentiable, and that the right-derivative can be expressed as a filtered sum of nonnegative components. Therefore, $f$ is nondecreasing. Moreover, the filter condition $0<\fnt{a}_{i}$ and $\lambda\fnt{a}_{i}<1-\fnt{\ell^{c}}_{i}$ implies that as $\lambda$ increases, the number of positive components being added decreases. Therefore, the right-derivative of $f$ is nonincreasing. Therefore, $f$ is concave.
\end{proof}

We now define the quasi-Newton iteration as follows:
\begin{align*}\lambda_{0}\aligned{=}0 &
\lambda_{k+1}\aligned{=}\lambda_{k}-\frac{f\left(\lambda_{k}\right)}{\delta f\left(\lambda_{k}\right)}\labell{eq:NewtonIteration}.\end{align*}

The following theorem establishes both convergence and equivalence of \eqref{eq:NewtonIteration} to the quadratic knapsack problem \eqref{eq:QKP}.

\begin{theorem} \label{theorem:Reduction}
    If $b>0$, the quasi-Newton iteration \eqref{eq:NewtonIteration} converges in at most $L+1$ iterations to the root $\lambda_{\ast}$ such that $\clip_{0,1 - \fnt{\ell^c}}(\lambda_\ast \fnt{a})$ solves the quadratic knapsack problem \eqref{eq:QKP}. 
\end{theorem}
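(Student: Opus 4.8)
The plan is to reduce the quadratic knapsack problem \eqref{eq:QKP} to a scalar root problem via convex duality, and then to show that the quasi-Newton iteration \eqref{eq:NewtonIteration} solves that scalar problem after finitely many steps, exploiting the piecewise-linear, concave structure of $f$ established in the preceding Lemma.

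First I would establish the reduction. Problem \eqref{eq:QKP} minimizes a strictly convex quadratic over the polytope $\{0 \le \fnt{\theta} \le 1-\fnt{\ell^c},\ \fnt{a}^T\fnt{\theta} \ge b\}$, which is nonempty because $\fnt{\theta} = \fnt{1}-\fnt{\ell^c}$ recovers the low order scheme, and the low order scheme satisfies the cell entropy inequality, i.e. $\fnt{a}^T\fnt{\theta}\ge b$. Hence there is a unique minimizer, characterized by the KKT conditions. Writing $\mu \ge 0$ for the multiplier of $b - \fnt{a}^T\fnt{\theta}\le 0$ and $\alpha_i,\beta_i\ge 0$ for the box constraints, stationarity reads $2\fnt{\theta}_i = \mu\fnt{a}_i + \alpha_i - \beta_i$; a short case split on whether $\fnt{\theta}_i$ lies at its lower bound, in the interior, or at its upper bound (using the matching complementary slackness) shows this is equivalent to $\fnt{\theta}_i = \clip_{0,1-\fnt{\ell^c}_i}(\mu\fnt{a}_i/2)$ for all $i$, i.e. $\fnt{\theta} = \clip_{0,1-\fnt{\ell^c}}(\lambda\fnt{a})$ with $\lambda = \mu/2 \ge 0$. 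Since $b > 0$ precludes $\mu = 0$ (else $\fnt{\theta}=\fnt{0}$ would be infeasible), complementary slackness on the first constraint forces $\fnt{a}^T\fnt{\theta} = b$, that is $f(\lambda) = 0$. Reading these identities in reverse shows conversely that any $\lambda_\ast > 0$ with $f(\lambda_\ast) = 0$ produces a KKT point $\clip_{0,1-\fnt{\ell^c}}(\lambda_\ast\fnt{a})$, which is therefore the minimizer; and if $f$ has several roots then $\clip_{0,1-\fnt{\ell^c}}(\lambda\fnt{a})$ agrees on all of them (because $\delta f$ vanishes there), so $\fnt{\theta}_\ast$ is unambiguous.

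The heart of the proof is then the finite-termination count for \eqref{eq:NewtonIteration}. I would first note that $f$ is continuous and piecewise linear with $f(0) = -b < 0$; by the Lemma its slope only ever decreases, and only at the abscissae $\lambda = (1-\fnt{\ell^c}_i)/\fnt{a}_i$ for indices with $\fnt{a}_i > 0$, so $f$ has at most $L$ breakpoints and at most $L+1$ affine pieces, while feasibility gives $\lim_{\lambda\to\infty}f(\lambda)\ge 0$ and hence a smallest root $\lambda_\ast > 0$. Next I would show by induction that, until it reaches a root, the iteration obeys $\lambda_k < \lambda_{k+1} \le \lambda_\ast$ with $f(\lambda_k) \le 0$: the concavity supporting-line inequality $0 = f(\lambda_\ast) \le f(\lambda_k) + \delta f(\lambda_k)(\lambda_\ast - \lambda_k)$ gives $\lambda_{k+1}\le\lambda_\ast$, while $f(\lambda_k)<0$ together with $\delta f(\lambda_k)>0$ (which holds because $\delta f$ is nonnegative and nonincreasing and cannot vanish identically on $[\lambda_k,\infty)$ while a root still lies beyond $\lambda_k$) gives $\lambda_{k+1}>\lambda_k$, and then $f(\lambda_{k+1})\le f(\lambda_\ast) = 0$. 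The decisive observation is that each non-terminating step advances strictly into a later affine piece: on the piece containing $\lambda_k$ the graph of $f$ coincides with the Newton tangent line, so $\lambda_{k+1}$ is exactly the zero of that line; if this zero lies inside the current piece then $f(\lambda_{k+1})=0$ and the iteration has terminated at $\lambda_\ast$, and otherwise $\lambda_{k+1}$ lies beyond the piece's right breakpoint. Since $\lambda_0 = 0$ sits in the first piece and there are at most $L+1$ pieces, the iteration reaches a root after at most $L+1$ steps; combined with the first part, $\clip_{0,1-\fnt{\ell^c}}(\lambda_\ast\fnt{a})$ then solves \eqref{eq:QKP}.

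I expect the main obstacle to be the bookkeeping in this last step: making the ``advances into a later piece'' claim fully rigorous, in particular the degenerate cases where an iterate lands exactly on a breakpoint --- where Newton uses the right-derivative $\delta f$, i.e. the slope of the next piece --- or where several breakpoints coincide, and confirming throughout that $\delta f(\lambda_k)\neq 0$ whenever $\lambda_k$ is not already a root, so that the iteration stays well-defined.
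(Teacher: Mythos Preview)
Your proposal is correct and follows the same overall architecture as the paper: reduce \eqref{eq:QKP} to the scalar root equation $f(\lambda)=0$, then exploit the piecewise-linear concave structure of $f$ to bound the Newton iteration count by the number of affine pieces. The chief difference is in how the reduction is argued. You derive the form $\fnt{\theta}=\clip_{0,1-\fnt{\ell^c}}(\lambda\fnt{a})$ from the full KKT system of \eqref{eq:QKP}, with complementary slackness forcing $f(\lambda_\ast)=0$; the paper instead introduces an auxiliary box-constrained problem with objective $\tfrac{1}{2}\fnt{\theta}^T\fnt{\theta}-\lambda(\fnt{a}^T\fnt{\theta}-b)$, observes its minimizer is the clipped vector, and then directly compares objective values to conclude that any root $\lambda_\ast$ yields an optimizer of \eqref{eq:QKP}. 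The paper's Lagrangian-penalty comparison is slightly more elementary (no appeal to KKT theory or constraint qualifications), while your KKT route is more systematic and also certifies the converse direction and handles the possible non-uniqueness of $\lambda_\ast$ explicitly. For the iteration count, both arguments are the same in spirit; your inductive ``each non-terminating step jumps to a strictly later piece'' argument is more carefully spelled out than the paper's, which simply asserts that the bound equals the number of nondifferentiable points of $f$ in $[0,\lambda_\ast]$.
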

\begin{proof}

Consider a different form of the knapsack problem \eqref{eq:QKP}, which modifies the objective function:
\begin{align*}\min_{\begin{matrix}\fnt{a}^{T}\fnt{\theta}\ge b\\
0\le\fnt{\theta}\le1-\fnt{\ell^{c}}\end{matrix}}\frac{1}{2}\fnt{\theta}^{T}\fnt{\theta}-\lambda\left(\fnt{a}^{T}\fnt{\theta}-b\right).\labell{eq:EQKP}\end{align*}
We will show in this proof that this problem, which has an explicit solution as a function of $\lambda$, can be used to construct a solution for \eqref{eq:QKP}. It can be shown that an optimal solution for \eqref{eq:EQKP} has the form:
\begin{align*}\fnt{\theta}\left(\lambda\right)=\clip_{0,1-\fnt{\ell^{c}}}\left(\lambda\fnt{a}\right).\end{align*}
It can be shown that there exists $\lambda_{\ast}>0$ for which $f\left(\lambda_{\ast}\right)=\fnt{a}^{T}\fnt{\theta}\left(\lambda_{\ast}\right)-b=0$. In this case, $\fnt{\theta}_{\ast}=\fnt{\theta}\left(\lambda_{\ast}\right)$ is feasible for \eqref{eq:QKP}. Let $\fnt{\theta}\in\mathbb{R}^{L}$ be feasible for \eqref{eq:QKP}, and thus also \eqref{eq:EQKP}. Since $\fnt{\theta}_{\ast}$ is optimal for \eqref{eq:EQKP}, it has a smaller objective function value:
\begin{align*}\frac{1}{2}\fnt{\theta}_{\ast}^{T}\fnt{\theta}_{\ast}-\lambda\ub{\left(\fnt{a}^{T}\fnt{\theta}\left(\lambda_{\ast}\right)-b\right)}{f\left(\lambda_{\ast}\right)=0}\aligned{\le}\frac{1}{2}\fnt{\theta}^{T}\fnt{\theta}-\lambda\ub{\left(\fnt{a}^{T}\fnt{\theta}-b\right)}{\ge0}\\
\fnt{\theta}_{\ast}^{T}\fnt{\theta}_{\ast}\aligned{\le}\fnt{\theta}^{T}\fnt{\theta}.\end{align*}
Therefore, $\fnt{\theta}_{\ast}$ solves the quadratic knapsack problem \eqref{eq:QKP}. Since $f$ is concave and nondecreasing, it can be shown that the iterands $\lambda_{k}$ of the iteration \eqref{eq:NewtonIteration} are nondecreasing and lie within $\left[0,\lambda_{\ast}\right]$. Note also that \eqref{eq:NewtonIteration} performs a quasi-Newton iteration on $f$ with the right derivative of $f$. Thus, since $f$ is piecewise-linear, the number of iterations is bounded by the number of points of nondifferentiability of $f$ between $\left[0,\lambda_{\ast}\right]$. Every $\clip$ function which composes $f$ has at most two points of nondifferentiability: one which lies at $0$, and one which is positive. Therefore, since $f$ is composed of $L$ total clip functions, there are at most $L+1$ points of nondifferentiability of $f$ within $\left[0,\lambda_{\ast}\right]$. Thus, the iteration converges to the root of $f$ within $L+1$ iterations.

In summary, if $b>0$, the quasi-Newton iteration \eqref{eq:NewtonIteration} converges in at most $L+1$ iterations to the root $\lambda_{\ast}$ such that $\clip_{0,1-\fnt{\ell^{c}}}\left(\lambda_{\ast}\fnt{a}\right)$ solves the quadratic knapsack problem \eqref{eq:QKP}. 
\end{proof}

Note, the resulting algorithm for the solution of the continuous quadratic knapsack problem is similar to the one taken in \cite{Cominetti14}. In our case, the elementwise-positive bounds on the blending coefficients impose an extra restriction on the resulting function $f$. In particular, $f$ becomes nondecreasing and concave. This allows us to make a simplification to the algorithm which improves the runtime and memory usage, while also simplifying the resulting code. 

In summary, if $b \leq 0$, then $\fnt{\theta} = \fnt{0}$ solves \eqref{eq:QKP}. Otherwise, per Theorem \eqref{theorem:Reduction}, the solution to \eqref{eq:QKP} reduces to a scalar quasi-Newton rootfinding problem, which terminates in at most $L+1$ iterations, $L$ being the size of the knapsack problem. Therefore, \eqref{eq:QKP} can be solved with Algorithm \eqref{alg:algorithm2}. 

\begin{algorithm}
    \caption{Continuous quadratic knapsack problem solution.}
    \label{alg:algorithm2}
    \begin{algorithmic}
        \STATE{Given $\fnt{a},\fnt{\ell^{c}},b,tol$}
        \IF{$b\le0$}
            \RETURN{\fnt{0}}
        \ELSE
            \STATE{$\lambda_0 \gets 0$}
            \FOR{$k = 0...L$}
                \STATE{$\lambda_{k+1} \gets \lambda_k - \frac{f\left(\lambda_k\right)}{\delta f\left(\lambda_k\right)}$}
                \IF{$-f\left(\lambda_{k+1}\right)<tol$}
                    \RETURN{$\clip_{0,1-\fnt{\ell^{c}}}\left(\lambda_{k+1}\fnt{a}\right)$}
                \ENDIF
            \ENDFOR
        \ENDIF
    \end{algorithmic}
\end{algorithm}

Note that, though the iteration does provably converge in $L+1$ or fewer iterations, it very rarely takes more than $4$ total iterations, even up to $N = 15$, for our purposes. Therefore, since the computation time of both $f$ and $\delta f$ is linear in $L$, we can treat the quadratic knapsack algorithm as a roughly $\mathcal{O}\left(L\right)\sim\mathcal{O}\left(N^d\right) \sim \mathcal{O}(n)$ algorithm.

\subsection{Positivity Preservation} \label{sec:Positivity Preservation}

Recall, assuming admissible quantities, such as positive pressure and density for the compressible Euler equations, a scheme which satisfies a discrete entropy inequality such as \eqref{eq:CEI} typically behaves much more robustly. Moreover, the compressible Euler entropy function \cite{Godlewski13} is convex under the assumption that the density and pressure are positive. Therefore, we are motivated to consider schemes which satisfy positivity of physical variables. In this section, we will discuss a particular strategy for choosing $\fnt{\ell^{c}}$, the limiting coefficients for the knapsack problem, to ensure that the scheme satisfies a positivity condition for the compressible Euler equations. 

We assume that the explicit timestepper being used is strong stability preserving (SSP), so that the Runge-Kutta update is a convex combination of forward Euler stages. This way, enforcing positivity at each stage is sufficient to enforce positivity of the full Runge-Kutta update. Therefore, a potential inequality that could be used to satisfy positivity of a scalar field $u$ with pointwise evaluations $\fnt{u}$ might be $\fnt{u}+\Delta t\frac{\d\fnt{u}}{\d t}\ge0$, assuming $\Delta t$ is sufficiently small. In this way, it is guaranteed that each stage retains positive physical quantities. However, this constraint is often too loose. Though this might guarantee positivity after the current timestep is taken, applying the low order method to the solution at the next timestep might not be enough to guarantee positivity is retained. Therefore, we often instead seek to satisfy
\begin{align*}\fnt{u}+\Delta t\frac{\d \fnt{u}}{\d t}\ge\alpha\fnt{u}^{L}=\alpha\left(\fnt{u}+\Delta t\frac{\d \fnt{u}^{L}}{\d t}\right)\labell{eq:WeakPositivity},\end{align*}
where $\alpha\in\left[0,1\right]$. The resulting inequality is called a \textit{relative} positivity constraint. Since $\alpha\in\left[0,1\right]$, the low order method is feasible for the above inequality. This is important to ensure that the quadratic knapsack problem \eqref{eq:QKP} remains feasible. One can show that the following limiting coefficients $\fnt{\ell^c}$, when coupled with the subcell limiting operators \eqref{eq:BasicSLO} implies \eqref{eq:WeakPositivity}:

\begin{align*}\fnt{\ell^{c}}_{1}\aligned{=}1-\fnt{\ell}^\text{right}_1\\
\forall i\in\left\{2,...,n\right\},\ \fnt{\ell^{c}}_{i}\aligned{=}1-\min\left(\fnt{\ell}^\text{left}_i,\fnt{\ell}^\text{right}_i\right)\\
\fnt{\ell^{c}}_{n+1}\aligned{=}1-\fnt{\ell}^\text{left}_{n+1},\end{align*}
where
\begin{align*}\fnt{\ell}^\text{left}_i = \begin{cases}
    \frac{1 - \alpha}{2 \fnt{c}_{i}}\hat{\fnt{h}}_{i-1} & \fnt{c}_{i} > 0 \\ 0 & \text{no limiting necessary}
\end{cases} && \fnt{\ell}^\text{right}_i = \begin{cases}
    \frac{1 - \alpha}{-2 \fnt{c}_{i}}\hat{\fnt{h}}_i & \fnt{c}_{i} < 0 \\ 0 & \text{no limiting necessary}
\end{cases}\end{align*}\begin{align*}\fnt{s} = \fnt{E}^T\vecf{f}^{\ast }\left(\fnt{u_f}^+,\fnt{u_f}, \hat{\fnt{n}}\right)&&
\hat{\fnt{h}} = \frac{1}{\Delta t}\fnt{Mu} - \fnt{s} - \fnt{r}^L&&
    \fnt{c} = \fnt{R}\left(\fnt{r}^H - \fnt{r}^L\right).\labell{eq:c}\end{align*}

In doing this, we ensure that the blended scheme \eqref{eq:BlendedScheme} satisfies the relative positivity constraint \eqref{eq:WeakPositivity}. In the case that there are multiple variables whose positivity should be preserved (such as the case for the compressible Euler equations, where positivity of pressure and density must be enforced,) limiting coefficients for each individual variable must first be determined. Then, the limiting coefficients chosen for \eqref{eq:QKP} should be the elementwise maximum of each set of limiting coefficients. 

\section{Numerical Experiments}

In this section, we start by examining the spatial convergence of the proposed quadratic knapsack limiting scheme (QK) compared against linear knapsack limiting (LK), the discontinuous Galerkin spectral element method (DGSEM), and flux differencing with an entropy conservative volume flux (ESFD). Then, we analyze the time convergence of QK against LK and ESFD in shock-type simulations, where some form of stabilization is required to avoid crashing. Then, we consider the local linear stability of knapsack limiting against ESFD, which is known to be linearly unstable \cite{Gassner21}. Last, we combine the positivity preserving scheme derived in Section \eqref{sec:Positivity Preservation} with QK, and analyze its behavior for the Leblanc shock tube, Kelvin-Helmholtz instability, and Sedov blastwave. 

All simulations utilize the compressible Euler equations. The entropy conservative volume flux chosen for ESFD is flux Ranocha \cite{Ranocha20}. The surface flux for each scheme is the local Lax-Friedrichs flux (LxF) \eqref{eq:LaxFriedrichs}, unless otherwise specified. We use various timesteppers throughout this section, including \texttt{RK4}, a 4-stage 4th order method, and \texttt{SSPRK43}, a strong stability preserving (SSP) 4-stage, 3rd order method. We specify for each experiment whether the timesteppers used a fixed or adaptive timestep, which are both implemented in \texttt{OrdinaryDiffEq.jl} \cite{OrdinaryDiffEq}. Similarly, we use implementations from \texttt{Trixi.jl} \cite{Trixi1, Trixi3} for all quantities related to the compressible Euler equations.

\subsection{Spatial Convergence}

In this section, we examine the spatial convergence of quadratic knapsack limiting, compared against that for linear knapsack limiting, the discontinuous Galerkin spectral element method, and flux differencing with an entropy conservative volume flux. We also visually examine solutions generated by quadratic knapsack limiting. We start by numerically estimating the spatial convergence of a 1D density wave with a smooth initial condition. Then, we show plots for the modified Sod shock tube problem and the Shu-Osher shock tube problem. Last, we compare solutions generated for the Shu-Osher shock tube problem when using the local Lax-Friedrichs flux against using the HLLC flux \cite{Harten83}.

\subsubsection{1D Density Wave Spatial Convergence}

First, we test the high order spatial convergence of our scheme using a smooth 1D density wave. The initial condition for the 1D compressible Euler density wave is as follows:
\begin{align*}p\left(x,0\right)=1,&&
v\left(x,0\right)=1.7,&&
\rho\left(x,0\right)=\frac{1}{2}\sin\left(\pi x\right)+1.\end{align*}
The density wave solution is computed using the \texttt{RK4} timestepper with a fixed $\Delta t=10^{-4}$ to a final time of $T=1$, over the periodic domain $\left[-1,1\right]$. 

In Table \eqref{tab:PaperConvergence}, the $L^2$ norm of the solution at the final time is computed against the analytical solution for various element counts $M$. This is done for polynomial orders $N=1$ through $N=4$. The errors are used to estimate the spatial order of convergence. We observe that the optimal order of convergence $\mathcal{O}\left(\Delta x^{N+1}\right)$ is attained by the quadratic knapsack limiter. 

\begin{table}[h!]
\centering
\resizebox{\textwidth}{!}{
\begin{tabular}{{|c|c|c|c|c|c|c|c|c|}}
\hline
$$ & \multicolumn{2}{|c|}{$N=1$} & \multicolumn{2}{|c|}{$N=2$} & \multicolumn{2}{|c|}{$N=3$} & \multicolumn{2}{|c|}{$N=4$} \\
\hline
$M$ & \multicolumn{1}{|c}{$L^{2}\ \text{Error}$} & $\text{Rate}$ & \multicolumn{1}{|c}{$L^{2}\ \text{Error}$} & $\text{Rate}$ & \multicolumn{1}{|c}{$L^{2}\ \text{Error}$} & $\text{Rate}$ & \multicolumn{1}{|c}{$L^{2}\ \text{Error}$} & $\text{Rate}$ \\
\hline
$2$ & \multicolumn{1}{|c}{$1.22$} & $-$ & \multicolumn{1}{|c}{$6.18\cdot10^{-1}$} & $-$ & \multicolumn{1}{|c}{$1.03\cdot10^{-1}$} & $-$ & \multicolumn{1}{|c}{$1.38\cdot10^{-2}$} & $-$ \\
\hline
$4$ & \multicolumn{1}{|c}{$1.23$} & $-3.03\cdot10^{-3}$ & \multicolumn{1}{|c}{$7.76\cdot10^{-2}$} & $2.99$ & \multicolumn{1}{|c}{$6.54\cdot10^{-3}$} & $3.98$ & \multicolumn{1}{|c}{$4.01\cdot10^{-4}$} & $5.11$ \\
\hline
$8$ & \multicolumn{1}{|c}{$5.37\cdot10^{-1}$} & $1.19$ & \multicolumn{1}{|c}{$9.96\cdot10^{-3}$} & $2.96$ & \multicolumn{1}{|c}{$2.84\cdot10^{-4}$} & $4.53$ & \multicolumn{1}{|c}{$1.39\cdot10^{-5}$} & $4.85$ \\
\hline
$16$ & \multicolumn{1}{|c}{$1.60\cdot10^{-1}$} & $1.75$ & \multicolumn{1}{|c}{$1.35\cdot10^{-3}$} & $2.88$ & \multicolumn{1}{|c}{$1.58\cdot10^{-5}$} & $4.17$ & \multicolumn{1}{|c}{$5.00\cdot10^{-7}$} & $4.80$ \\
\hline
$32$ & \multicolumn{1}{|c}{$4.20\cdot10^{-2}$} & $1.93$ & \multicolumn{1}{|c}{$1.71\cdot10^{-4}$} & $2.99$ & \multicolumn{1}{|c}{$9.35\cdot10^{-7}$} & $4.08$ & \multicolumn{1}{|c}{$1.63\cdot10^{-8}$} & $4.94$ \\
\hline
\end{tabular}}
\caption{Errors and estimated orders of convergence for various degrees $N$.}
\label{tab:PaperConvergence}
\end{table}




\subsubsection{Modified Sod Shock Tube}

Next, we examine the solutions generated in the case when some form of stabilization is required to avoid crashing. To do this, we use the modified Sod shock tube initial condition for the 1D compressible Euler equations:
\begin{align*}p\left(x,0\right)=\begin{cases}1&x<.3\\
    .1&\text{otherwise}\end{cases},&&
    v\left(x,0\right)=\begin{cases}.75&x<.3\\
    0&\text{otherwise}\end{cases},&&
    \rho\left(x,0\right)=\begin{cases}1&x<.3\\
    .125&\text{otherwise}\end{cases}.\end{align*}
The modified Sod shock tube solution is computed using the adaptive \verb|RK4| timestepper with the absolute and relative tolerances set to $10^{-6}$ and $10^{-4}$ respectively. The simulation is performed to a final time of $T=.2$, over the domain $[0,1]$, with Dirichlet boundary conditions, enforcing that the solution remain constant at the boundaries over time. In Figure \eqref{fig:ModSodPlots}, the density at the final time computed using LK, QK, and ESFD for $(N, M) = (3, 100)$ and $(N, M) = (7, 50)$ are shown. The modified Sod shock tube problem is commonly used to test entropy stable methods, because it requires some form of stabilization to converge to the entropy solution \cite{Lin24}. For instance, for the parameters above, DGSEM crashes prior to the final time. We observe that LK and QK obtain similar solutions for all parameters. However, the solution generated by ESFD has strong oscillations that dominate the solution. This will be further discussed in Section \eqref{sec:LinearStability}.
\begin{figure}%
    \centering
    \subfloat[\centering Linear knapsack limiting]{{\includegraphics[width=4.2cm]{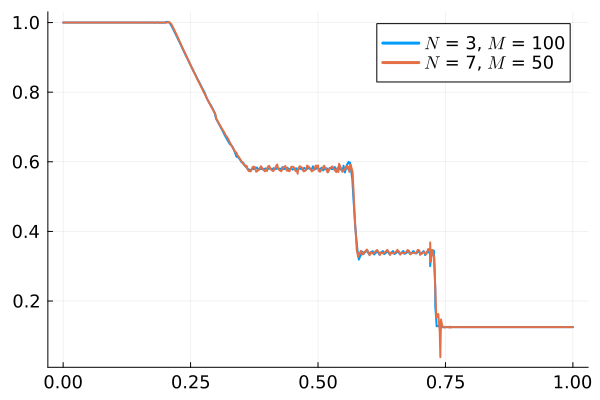} }}%
    \subfloat[\centering Quadratic knapsack limiting]{{\includegraphics[width=4.2cm]{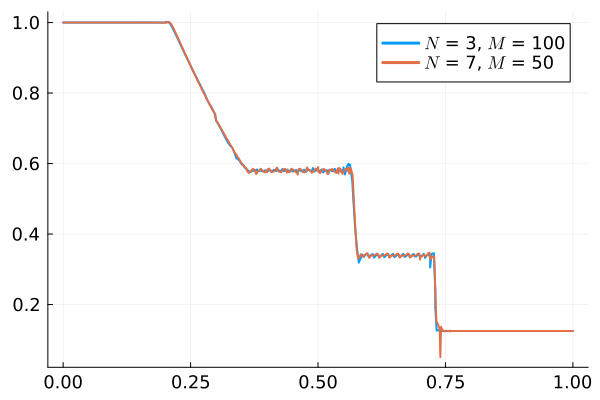} }}%
    \subfloat[\centering ESFD]{{\includegraphics[width=4.2cm]{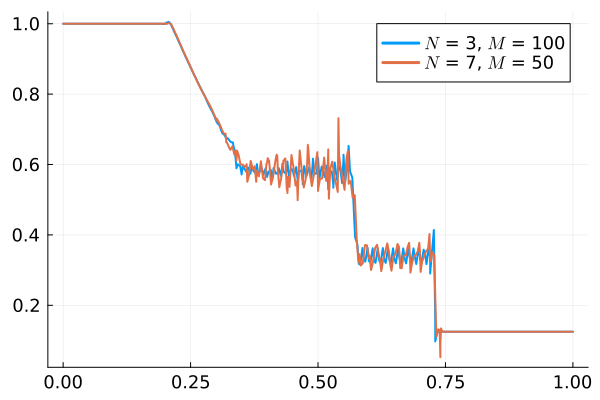} }\label{fig:ModSodLinearInstability}}%
    \caption{Density of the modified Sod shock tube solution at final time $T=.2$.}%
    \label{fig:ModSodPlots}%
\end{figure}

\subsubsection{Shu-Osher Shock Tube}
Last, we examine the solutions generated for the Shu-Osher shock tube, a more difficult version of the modified Sod shock tube, whose solution contains a stronger shock and a mixture of smooth and discontinuous solution features. For this section, we will consider the \textit{HLLC version} of each knapsack limiting scheme to be the one where the low order volume flux and each surface flux are the HLLC flux instead of the LxF flux. Note then that the \textit{HLLC version} of the low order method is still positivity preserving for the compressible Euler and Navier-Stokes equations \cite{Batten97}, and therefore the blended scheme still can be constructed in a way that positivity is preserved. The \textit{HLLC version} of the ESFD method will replace the LxF surface flux with the HLLC surface flux. The initial condition for the 1D compressible Euler Shu-Osher shock tube is as follows:
\begin{align*}
(\rho(x,0), v(x,0), p(x,0)) = \begin{cases}
(3.857143, 2.629369, 10.3333) & x < -4\\
(1+.2\sin\left(5x\right), 0, 1) & \text{otherwise}.
\end{cases}
\end{align*}

The Shu-Osher solution is computed using the adaptive \verb|RK4| timestepper, with the absolute and relative tolerances set to $10^{-6}$ and $10^{-4}$ respectively. The simulation is performed to a final time of $T=1.8$, over the domain $[-5,5]$, with Dirichlet boundary conditions, enforcing that the solution remain constant at the boundaries over time. In Figure \eqref{fig:ShuOsh}, the density of the solutions at the final time computed using the LxF and HLLC versions of QK and the LxF version of ESFD for $N = 3$ and $M = 100$ are shown. Each solution is compared with a reference solution computed using a 5th order WENO method with 25000 cells. We observe minor oscillations for the QK solution, which are further dampened by the HLLC version of QK, but major oscillations are observed in the ESFD solution. Moreover, the peaks of the right-hand-side of the HLLC QK solution are very close to the target WENO solution. Since the HLLC version of ESFD crashes prior to the final time $T=1.8$, it has been omitted.

\begin{figure}%
    \centering
    \subfloat[\centering Quadratic knapsack limiting]{{\includegraphics[width=4.2cm]{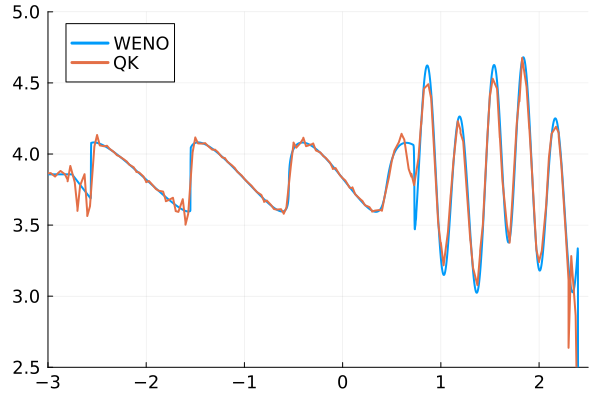} }}%
    \subfloat[\centering HLLC quadratic knapsack limiting]{{\includegraphics[width=4.2cm]{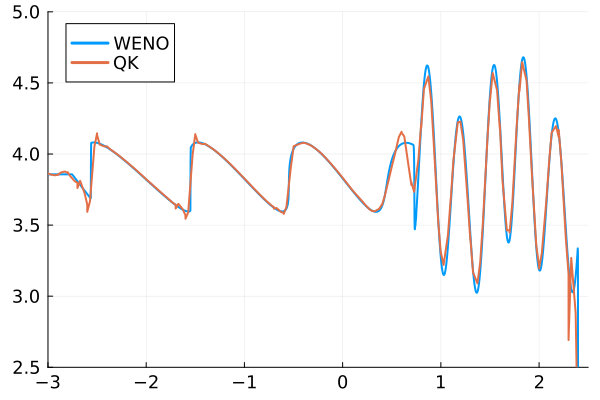} }}%
    \subfloat[\centering ESFD]{{\includegraphics[width=4.2cm]{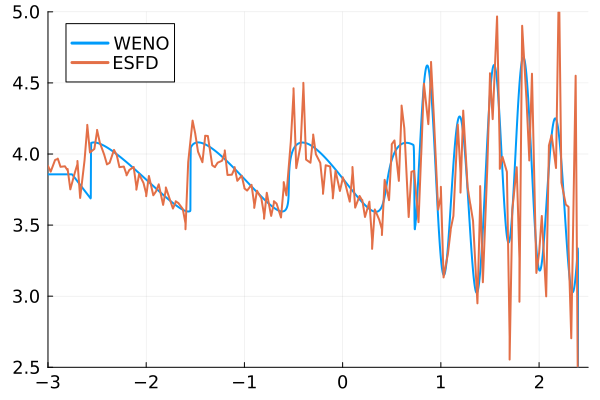} }\label{fig:ShuOsherLinearInstability}}%
    \caption{Density of the Shu Osher shock tube solution at final time $T=.1.8$.}%
    \label{fig:ShuOsh}%
\end{figure}

\subsection{Time Convergence}

In this section, we examine the behavior of quadratic knapsack limiting with adaptive timestepping in shock-type problems, compared against that of linear knapsack limiting and flux differencing with an entropy conservative volume flux. We will observe that quadratic knapsack limiting tends to require far fewer adaptive timesteps than linear knapsack limiting in high order simulations of shock-type problems. We will also conclude that in shock-type problems, linear knapsack limiting attains a $\mathcal{O}(\Delta t)$ order of convergence in time, compared against $\mathcal{O}(\Delta t^2)$ for quadratic knapsack limiting and flux differencing with an entropy conservative volume flux.

\subsubsection{Adaptive Timestepping}

Adaptive timesteppers will often apply two different estimates for the solution at the next timestep. Then, it will compare the absolute and relative errors between the two estimate solutions. If either exceeds a chosen tolerance, $\Delta t$ is made smaller for the next timestep, or even for the current timestep if deemed necessary. Otherwise, $\Delta t$ is made larger \cite{OrdinaryDiffEq}.

The solution to the linear knapsack problem is discontinuous in terms of its input variables, while the solution to the quadratic knapsack problem is continuous. Since the solutions to the knapsack problems are incorporated into their corresponding volume terms via \eqref{eq:Blending}, we should expect that the regularity of the knapsack solver may affect the regularity of the scheme. Low regularity (in time) systems drive adaptive timesteppers to resolve the discontinuities. This results in the simulation requiring more adaptive timesteps. We will observe that this results in LK requiring more adaptive timesteps to complete shock-type simulations than QK. 

First, we analyze the number of adaptive timesteps taken to complete the modified Sod shock tube problem. In Table \eqref{fig:RK4ModSodAdaptive}, we examine the total number of adaptive timesteps taken by the adaptive timestepper \verb|RK4| to complete the modified Sod shock tube problem, for LK, QK, and ESFD, using various orders, mesh sizes, and tolerances. \comment{To ensure that patterns remain consistent across various timesteppers, we do the same for the adaptive timestepper \verb|Tsit5| in Table \eqref{fig:Tsit5ModSodAdaptive}. For each table, }DNF means the solution took more than one million timesteps to converge. We observe, especially for high order $N = 7$, LK takes many more adaptive timesteps to complete the modified Sod shock tube simulation than QK and ESFD. We observe similar behavior for various shock type problems and adaptive timesteppers.

\begin{table}[h!]
\centering
\begin{tabular}{{|c|c|c|c|c|}}
\hline
$\text{abstol, reltol}$ & $N,M$ & $\text{Linear Knapsack}$ & $\text{Quadratic Knapsack}$ & $\text{ESFD}$ \\
\hline
\multirow{2}{*}{$10^{-6},10^{-4}$} & $3,\ 64$ & $2757$ & $1635$ & $1078$ \\
\cline{2-5}
 & $7,\ 32$ & $44540$ & $2459$ & $1749$ \\
\hline
\multirow{2}{*}{$10^{-8},10^{-6}$} & $3,\ 64$ & $8372$ & $4955$ & $3393$ \\
\cline{2-5}
 & $7,\ 32$ & \cellcolor[HTML]{A8A8A8}$\text{DNF}$ & $7450$ & $5572$ \\
\hline
\multirow{2}{*}{$10^{-10},10^{-8}$} & $3,\ 64$ & $18896$ & $13260$ & $10232$ \\
\cline{2-5}
 & $7,\ 32$ & \cellcolor[HTML]{A8A8A8}$\text{DNF}$ & $36013$ & $83191$ \\
\hline
\end{tabular}
\cprotect\caption{Total timesteps for \verb|RK4| in the modified Sod shock tube simulations.}
\label{fig:RK4ModSodAdaptive}
\end{table}

\subsubsection{Order of Convergence in Time}

As we observed when experimenting with adaptive timesteppers, the linear knapsack problem makes LK less regular in time than QK in shock-type problems. In fact, we observe that it also reduces the order of convergence in time in shock-type problems.

For convenience of notation, we define $LK(\Delta t)$, $QK(\Delta t)$, and $ESFD(\Delta t)$ to be the solution of a given problem at the final time when using a given timestepper with fixed timestep $\Delta t$ when using LK, QK, and ESFD respectively. One way to measure time convergence numerically is to examine $\norm{S(\Delta t_\text{ref}) - S(\Delta t)}$ where the scheme $S\in\{LK,QK,ESFD\}$, and $\Delta t_\text{ref} \ll  \Delta t$ is some reference timestep. In this way, $\norm{S(\Delta t_\text{ref}) - S(\Delta t)} = \mathcal{O}(\Delta t^K)$. 

In Figure \eqref{fig:ModSodFixed}, we plot the $L^2$ error $\norm{S(\Delta t_\text{ref}) - S(\Delta t)}$ for the modified Sod shock tube problem, for each $S\in\{LK,QK,ESFD\}$ against various timesteps $\Delta t \geq 5\Delta t_\text{ref}$, where $\Delta t_\text{ref} = 10^{-6}$. If a scheme crashes prior to the final time for some $\Delta t$, it is omitted from the plot. The chosen timestepper was \verb|RK4|, and plots are shown for each $(N, M)=(3,64)$ and $(N, M)=(7, 32)$. Note however that we observe similar results for various timesteppers of order at least $\mathcal{O}(\Delta t^2)$. For $(N, M) = (3, 64)$, we observe that both QK and ESFD obtain the same rates of convergence in time, while a lower rate of convergence in time is observed for LK. For $(N,M)=(7,32)$, we observe the same difference between LK and QK, but we observe a faster rate of convergence for ESFD. The $L^2$ errors measured were within the same level as those for QK, for the range of timesteps tested. It is unclear why we observe second order convergence in time for $N=3$ but a higher convergence rate in time for $N=7$. 
\begin{figure}%
    \centering
    \subfloat[\centering \comment{\texttt{RK4}, }$N = 3, M = 64$]{{\includegraphics[width=6cm]{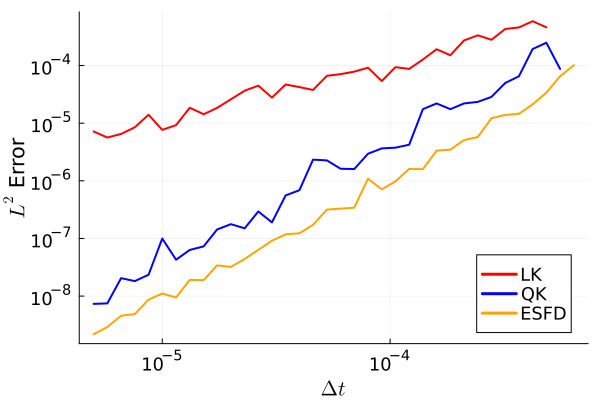} }}%
    \qquad
    \subfloat[\centering \comment{\texttt{RK4}, }$N = 7, M = 32$]{{\includegraphics[width=6cm]{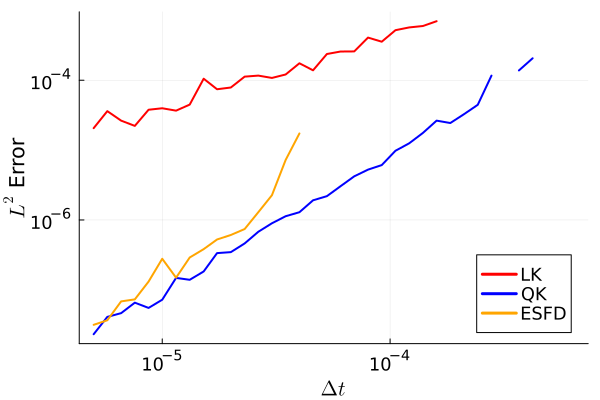} }\label{fig:RK4TimeLinearInstability}}%
    \qquad
    \caption{$L^2$ error of the modified Sod shock tube simulations against reference solutions at final time $T = .2$ against fixed timestep.}%
    \label{fig:ModSodFixed}%
\end{figure}


In order to reveal the average behavior of the time convergence of LK, QK, and ESFD, we apply the same study for various timesteppers and mesh parameters $(N, M)$. Then, we measure the slope of the corresponding log-log relationship between $\Delta t$ and $\norm{S(\Delta t_\text{ref}) - S(\Delta t)}$, and place the slopes in Table \eqref{fig:TimeConvergenceTable}. We observe from these results that both QK and ESFD obtain roughly $\mathcal{O}(\Delta t^2)$ time convergence on average, while LK obtains $\mathcal{O}(\Delta t)$ time convergence. We observe these results for various shock-type problems. 


\begin{table}[h!]
\centering
\begin{tabular}{{|c|c|c|c|c|}}
\hline
$\text{Timestepper}$ & $N,M$ & $\text{Linear Knapsack}$ & $\text{Quadratic Knapsack}$ & $\text{ESFD}$ \\
\hline
\multirow{2}{*}{$\texttt{RK4}$} & $3,\ 64$ & $0.99$ & $2.11$ & $2.11$ \\
\cline{2-5}
 & $7,\ 32$ & $1.01$ & $1.99$ & $2.63$ \\
\hline
\multirow{2}{*}{$\texttt{SSPRK43}$} & $3,\ 64$ & $1.00$ & $2.26$ & $2.46$ \\
\cline{2-5}
 & $7,\ 32$ & $1.09$ & $2.10$ & $2.89$ \\
\hline
$\text{Average}$ &  & $1.02$ & $2.12$ & $2.52$ \\
\hline
\end{tabular}
\caption{Estimated orders of convergence in time.}
\label{fig:TimeConvergenceTable}
\end{table}

\subsection{Local Linear Stability}\label{sec:LinearStability}

In this section, we examine the local linear stability of the knapsack limiting schemes compared against the discontinuous Galerkin spectral element method and flux differencing with an entropy conservative volume flux. Since the knapsack limiting schemes are a linear combination of the high and low order schemes \eqref{eq:HighOrderScheme} and \eqref{eq:LowOrderScheme}\footnote{The low order scheme \eqref{eq:LowOrderScheme} is a central scheme with an added viscosity in terms of the conservative variables, and is thus locally linearly stable.}, which are each locally linearly stable, we expect them to also be locally linearly stable. We will conclude that the knapsack limiting schemes are locally linearly stable, while flux differencing with an entropy conservative volume flux is not. We will also examine the effect of linear instability on the errors of the simulation as time progresses, as well as the eigenvalues of the linearized Jacobian of each scheme. 
\comment{Consider a linear system of ODEs:
\begin{align*}\frac{d\vecf{\vecf{y}}}{d t}=\fnt{A}\vecf{y}\end{align*}
where $\vecf{y}:\mathbb{R}_{\ge0}\to\mathbb{R}^{v}$, and $\fnt{A}\in\mathbb{R}^{v\times v}$. For illustrative purposes, we will assume that $\fnt{A}$ is diagonalizable. Hence, $\fnt{A}=\fnt{V}\diag\left(\lambda_{1},...,\lambda_{v}\right)\fnt{V}^{-1}$, where $\fnt{V}\in\mathbb{R}^{v\times v}$ is invertible, and $\lambda_{1},...,\lambda_{v}$ are the eigenvalues of $\fnt{A}$. It is known that the explicit solution is:
\begin{align*}\vecf{y}\left(\vecf{x},t\right)=\fnt{V}\diag\left(e^{\lambda_{1}t},...,e^{\lambda_{v}t}\right)\fnt{V}^{-1}\vecf{y}\left(\vecf{x},0\right)\end{align*}
If there is some $\lambda\in\sigma\left(\fnt{A}\right)$ for which $\Re\left(\lambda\right)>0$, then $e^{\lambda t}=e^{\Re\left(\lambda\right)t}\left(\cos\left(\Im\left(\lambda\right)t\right)+i\sin\left(\Im\left(\lambda\right)t\right)\right)$ is unbounded. Thus, the solution may grow indefinitely, exponentiating the modes corresponding to eigenvalues with positive real parts. We call the system above \textbf{linearly stable} if $\Re\left(\sigma\left(\fnt{A}\right)\right)\le0$. This form of stability ensures that exponentiation of oscillations does not occur. However, this analysis does not extend to nonlinear PDEs. }Consider the ODE system:
\begin{align*}\frac{\partial\vecf{u}}{\partial t}=\fnt{rhs}\left(\vecf{u},t\right),\end{align*}
where $\fnt{rhs}:\mathbb{R}^{v}\times\mathbb{R}_{\ge0}\to\mathbb{R}^{v}$ denotes the right-hand-side of the PDE. $\fnt{rhs}$ is a stand-in for the evaluation of $\frac{\d\fnt{u}}{\d t}$. For instance, $\fnt{rhs}$ may represent the DGSEM, ESFD, LK, or QK evaluations of $\frac{\d\vect{u}}{\d t}$. The ODE system above is called \textit{locally linearly stable} if its linearization is linearly stable. That is, $\Re\left(\sigma\left(\nabla_{\vecf{u}}\fnt{rhs}\left(\vecf{u},t\right)\right)\right)\le0$, where $\nabla_{\vecf{u}}\fnt{rhs}\left(\vecf{u},t\right)$ denotes the Jacobian of $\fnt{rhs}\left(\vecf{u},t\right)$. The maximum real part of the spectrum of the linearized Jacobian describes how sensitive a semi discretization of a PDE (such as LK, QK, ESFD, or DGSEM) is to small perturbations around a smooth, well-resolved solution. 
However, it was shown in \cite{Gassner21} that, in contrast to a standard nodal DG method, the use of flux differencing with an entropy conservative volume flux (for example, ESFD) appears to result in a linearly unstable scheme. The result is a simulation which amplifies nonphysical oscillations over time, as seen in Figures \eqref{fig:ModSodLinearInstability} and \eqref{fig:ShuOsherLinearInstability}. We see in the same plots that the knapsack limiting solutions do not seem to undergo the same level of development of nonphysical oscillations. 

\subsubsection{Error Growth over Time}

A consequence of linear instability is an increase in error as time progresses. To examine this, we consider the Sod shock tube initial condition for the 1D compressible Euler equations:
\begin{align*}p\left(x,0\right)=\begin{cases}1&x<.5\\
    .1&\text{otherwise}\end{cases},&&
    v\left(x,0\right)=0,&&
    \rho\left(x,0\right)=\begin{cases}1&x<.5\\
    .125&\text{otherwise}\end{cases}.\end{align*}

The Sod shock tube solution is computed using the \verb|RK4| timestepper with a fixed $\Delta t=10^{-5}$. The simulation is performed to a final time of $T=.2$, over the domain $[0,1]$, with Dirichlet boundary conditions, enforcing that the solution remain constant at the boundaries over time. Note that the Sod shock tube differs from the modified Sod shock tube only in their initial conditions. The Sod shock tube problem is also often used to test entropy stable schemes, because it requires some form of stabilization to converge to the entropy solution. The analytic solution to the Sod shock tube problem is computed by the Julia library \verb|SodShockTube.jl|. 

In Figure \eqref{fig:SodShockErrorsOverTime}, we plot the $L^2$ error of the Sod shock tube simulation against the analytic solution over time for each scheme LK, QK, ESFD, and DGSEM. In order to make the data easier to visualize, the errors are postprocessed using a rolling mean, with a window width of $.01024$. If the simulation crashed early, its error is omitted. We observe that the knapsack limiters obtain smaller errors when compared against ESFD. We attribute this to the linear instability of ESFD. 

\begin{figure}%
    \centering
    \subfloat[\centering $N = 3, M = 64$]{{\includegraphics[width=6cm]{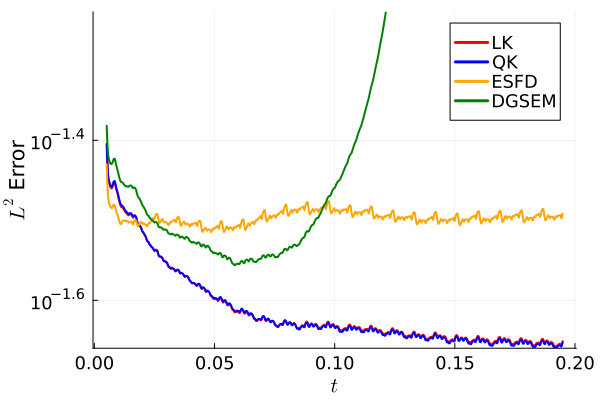} }}%
    \qquad
    \subfloat[\centering $N = 7, M = 32$]{{\includegraphics[width=6cm]{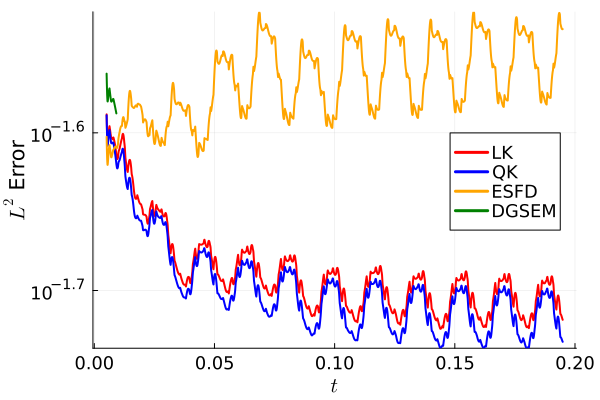} }}%
    \caption{$L^2$ error of the Sod shock tube simulations against analytic solution over time.}%
    \label{fig:SodShockErrorsOverTime}%
\end{figure}

\subsubsection{Eigenvalues of Linearized Jacobian}

Last, to measure the local linear stability of each scheme, we measure the maximal real parts of the eigenvalues of the linearized Jacobian for each scheme. For convenience of notation, we denote $\fnt{LK}$, $\fnt{QK}$, $\fnt{ESFD}$,and $\fnt{DGSEM}$ as the evaluations of $\frac{\d\vect{u}}{\d t}$ from LK, QK, ESFD, and DGSEM respectively. In Table \eqref{fig:Eigenvalues}, we show the maximal real parts of the eigenvalues of the linearized Jacobians of $\fnt{rhs}\in \{\fnt{LK}, \fnt{QK}, \fnt{ESFD}, \fnt{DGSEM}\}$, at the initial timestep of the modified Sod shock tube, for various mesh parameters $(N, M)$. The linearized Jacobians were estimated using the \verb|ForwardDiff.jl| Julia package \cite{ForwardDiff}. We observe that the maximal eigenvalues for LK, QK, and DGSEM are all of the order $10^{-13}$, while those for ESFD are of the order $10^2$ or $10^3$. Though the maximal eigenvalues for LK, QK, and DGSEM are positive, they are also close to machine epsilon, which signifies that these schemes are locally linearly stable up to machine epsilon. \comment{The eigenvalue spectra for $(N, M) = (3, 64)$ for each of the linearized Jacobians of $\fnt{LK}$, $\fnt{QK}$, $\fnt{ESFD}$ and $\fnt{DGSEM}$ are shown in Figure \eqref{fig:EigenvalueSpectra}.}

\begin{table}[h!]
\centering
\begin{tabular}{{|c|c|c|c|c|}}
\hline
$N,M$ & $\text{Linear Knapsack}$ & $\text{Quadratic Knapsack}$ & $\text{ESFD}$ & $\text{DGSEM}$ \\
\hline
$3,\ 64$ & $4.41\cdot10^{-13}$ & $4.64\cdot10^{-13}$ & $48.0$ & $4.41\cdot10^{-13}$ \\
\hline
$7,\ 32$ & $2.96\cdot10^{-13}$ & $5.57\cdot10^{-13}$ & $116$ & $2.96\cdot10^{-13}$ \\
\hline
$15,\ 16$ & $5.24\cdot10^{-13}$ & $1.46\cdot10^{-13}$ & $176$ & $5.24\cdot10^{-13}$ \\
\hline
\end{tabular}
\caption{$\max \Re \left(\sigma\left(\nabla_{\fnt{u}} \fnt{rhs}(\fnt{u})\right)\right)$.}
\label{fig:Eigenvalues}
\end{table}


\subsection{Positivity Preservation}

In the numerical simulations performed thus far, preservation of positivity was not required to attain a solution. However, for many benchmarks, such as the Leblanc shock tube, Sedov blast wave, and long-time Kelvin-Helmholtz instability, some relative positivity constraint, such as \eqref{eq:WeakPositivity} must be enforced to ensure that physical constraints, such as pressure and density, do not dip below 0. In this section, we benchmark quadratic knapsack limiting in various difficult simulations, often used to test positivity preserving methods. The strategy taken for preserving positivity is the one derived in Section \eqref{sec:Positivity Preservation}.

\subsubsection{Leblanc Shock Tube} \label{sec:Leblanc}

First, we test quadratic knapsack limiting on the 1D compressible Euler, Leblanc shock tube problem. This problem mimics pressure waves in near vacuum conditions, common in hypersonic flow. The initial condition for this problem is as follows:
\begin{align*}p\left(x,0\right)=\begin{cases}10^9&x<0\\
    1&\text{otherwise}\end{cases},&&
    v\left(x,0\right)=0,&&
    \rho\left(x,0\right)=\begin{cases}2&x<0\\
    10^{-3}&\text{otherwise}\end{cases}.\end{align*}
The initial condition has a pressure ratio of $10^9$ and a density ratio of $2000$. Due to oscillations formed around shocks in high order simulations which are proportional to the jump, preservation of positivity is necessary to produce a stable solution. The Leblanc shock tube solution is computed using the strong stability preserving \verb|SSPRK43| timestepper with a fixed $\Delta t=7.1\cdot10^{-8}$. This timestep is within $ 10^{-9}$ of the maximum stable timestep. The simulation is performed to a final time of $T = 10^{-4}$, over the domain $[-10,10]$, with Dirichlet boundary conditions, enforcing that the solution remain constant at the boundaries over time. 

In Figure \eqref{fig:LeblancShocktube}, we plot the density, velocity, and pressure of the solution generated by the HLLC version of quadratic knapsack limiting with $(N, M)=(3, 1000)$, against a reference solution. The HLLC version is used to remain consistent with \cite{Peyvan23}. The relative positivity constraint with $\alpha = .55$ is enforced, and plots are shown for both nodewise limiting coefficients \eqref{sec:Positivity Preservation}, and elementwise limiting coefficients, where the limiting coefficients are chosen as the elementwise maximum of the limiting coefficients derived in Section \eqref{sec:Positivity Preservation}. We observe that the solution is close to the reference solution, though the solution with elementwise limiting coefficients does a better job at limiting oscillations around the peaks of shocks. 

\begin{figure}%
    \centering
    \subfloat[\centering Density of solution with nodewise limiting coefficients]{{\includegraphics[width=4.2cm]{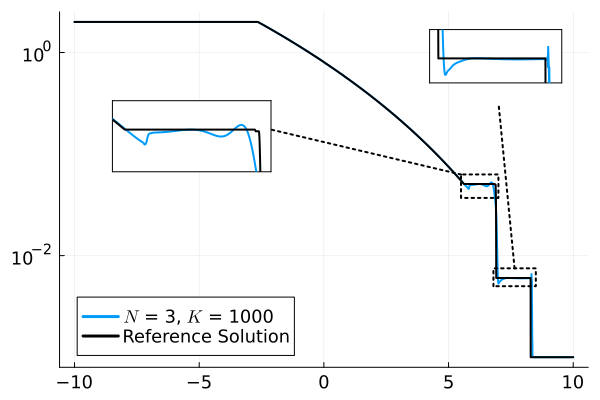} }}%
    \subfloat[\centering Velocity of solution with nodewise limiting coefficients]{{\includegraphics[width=4.2cm]{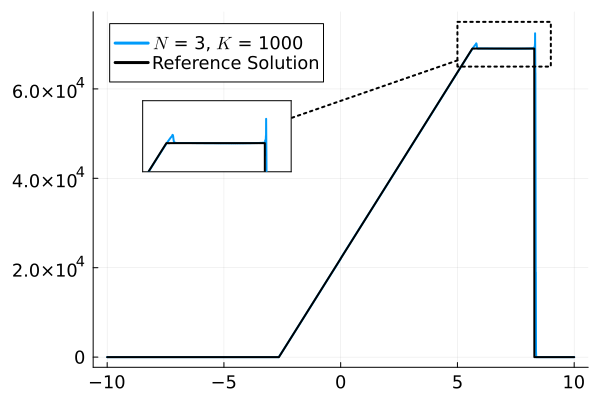} }}%
    \subfloat[\centering Pressure of solution with nodewise limiting coefficients]{{\includegraphics[width=4.2cm]{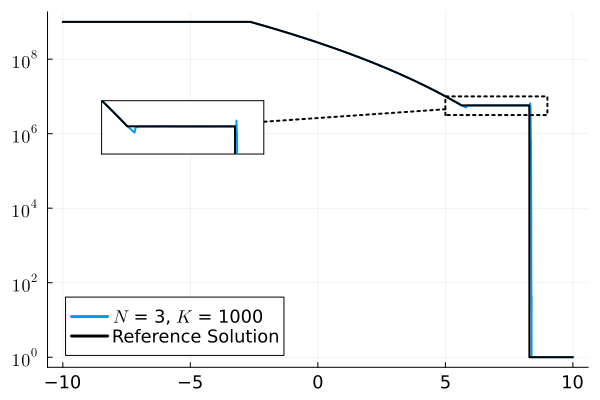} }}%
    \qquad
    \subfloat[\centering Density of solution with elementwise limiting coefficients]{{\includegraphics[width=4.2cm]{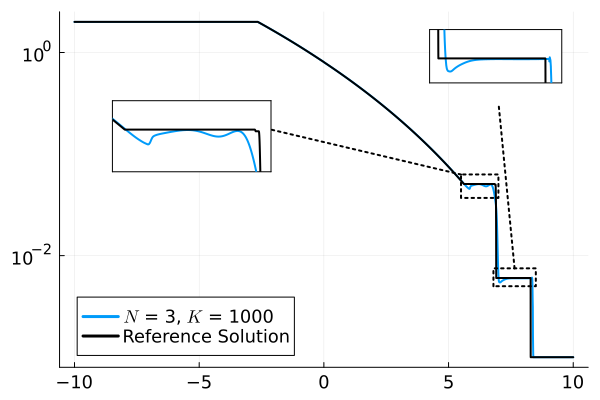} }}%
    \subfloat[\centering Velocity of solution with elementwise limiting coefficients]{{\includegraphics[width=4.2cm]{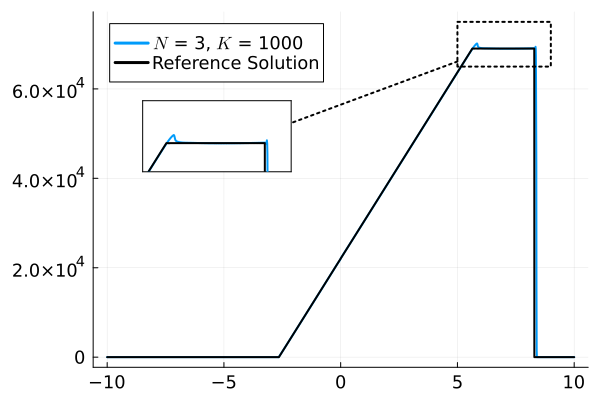} }}%
    \subfloat[\centering Pressure of solution with elementwise limiting coefficients]{{\includegraphics[width=4.2cm]{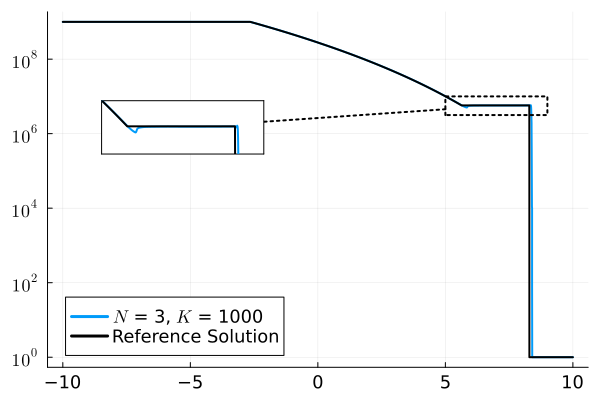} }}%
    \caption{HLLC quadratic knapsack limiting solution to the Leblanc shock tube with $N = 3, M = 1000$ and relative positivity constraint $\alpha = .55$.}%
    \label{fig:LeblancShocktube}%
\end{figure}

While we have used an experimentally determined fixed timestep for all positivity preserving experiments, positivity preservation is only guaranteed under a stricter CFL condition \cite{Lin23}. To show that our choice of fixed timestep size is not overly small relative to the conservative positivity preserving timestep, we compute the effective CFL condition, which we define as the ratio of our fixed timestep size with the maximum positivity preserving timestep size derived in \cite{Lin23}. \comment{To measure the effective CFL condition corresponding to the chosen timestep $\Delta t = 7.1\cdot 10^{-8}$, we define the following:
\begin{align}
    \text{CFL} &= \frac{2 \Delta t}{\min_i \frac{\Delta x \cdot \omega_i}{2}\cdot \frac{1}{\lambda_i}}=4 \frac{\Delta t}{\Delta x} \max_i \frac{\lambda_i}{\omega_i}\\
    \lambda_i &= \norm{\fnt{v}_i} + \sqrt{\frac{\gamma p_i}{\rho_i}}\\
    \fnt{v}_i &= \begin{pmatrix}
        v_{i,1}\\
        \dots\\
        v_{i,d}
    \end{pmatrix}
\end{align}
where $\rho_i$, $v_{i,1}\dots v_{i,d}$, and $p_i$ correspond to the density, velocity components, and pressure of the solution at the $i$th spatial node. Moreover, $\lambda$ is an estimate of the maximum wave speed, and $\omega_i$ corresponds to the LGL quadrature weight corresponding to node $i$. In Figure \eqref{fig:LeblancCFL}, we show the effective CFL condition corresponding to the maximum stable timestep $\Delta t = 7.1 \cdot 10^{-8}$ over time. We observe that the effective CFL condition is of order $\mathcal{O}(1)$ throughout the simulation. 

\begin{figure}%
    \centering
    \subfloat[\centering Nodewise Limiting Coefficients]{{\includegraphics[width=6cm]{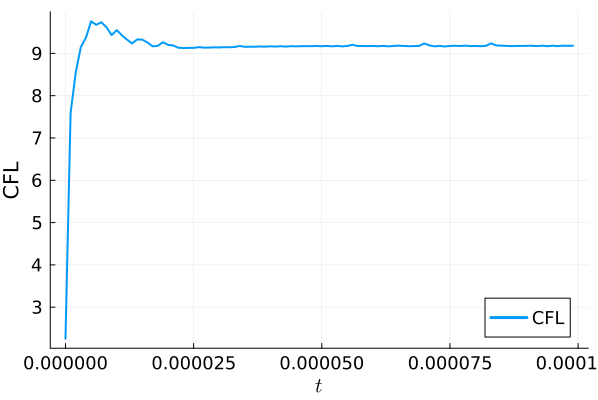} }}%
    \qquad
    \subfloat[\centering Elementwise Limiting Coefficients]{{\includegraphics[width=6cm]{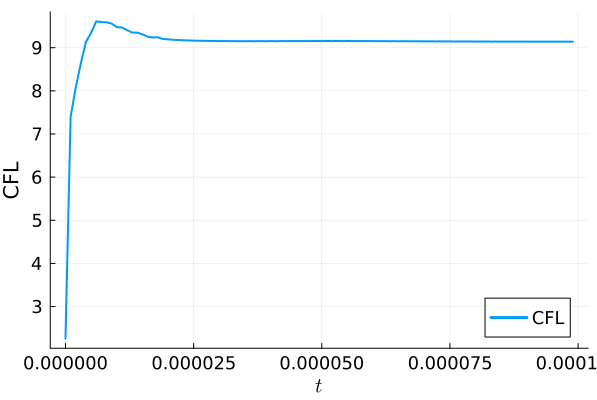} }}%
    \caption{Effective CFL condition for the Leblanc shock tube simulation corresponding to $\Delta t$}%
    \label{fig:LeblancCFL}%
\end{figure}}We observe that the effective CFL condition corresponding to the maximum stable timestep $\Delta t = 7.1 \cdot 10^{-8}$ is of order $\mathcal{O}(1)$ throughout the simulation. 

\subsubsection{Sedov Blast Wave}

Next, we examine the behavior of quadratic knapsack limiting in solving the 2D compressible Euler, Sedov blast wave problem. This problem is often used to test the behavior of 2D numerical methods in near vacuum conditions. The initial condition for this problem is as follows:
\begin{align*}p\left(x,y,0\right)=\begin{cases}\frac{.4}{\pi r_{0}^{2}}&r<r_{0}\\
10^{-5}&\text{otherwise}\end{cases},&&\vecf{v}\left(x,y,0\right)=\fnt{0},&&\rho\left(x,y,0\right)=1,\\
r=\sqrt{x^{2}+y^{2}},&&r_{0}=4\Delta x,&&\Delta x=\frac{3}{M}.\end{align*}
The Sedov blast wave problem is computed using the strong stability preserving \verb|SSPRK43| timestepper with a fixed $\Delta t = 4.4 \cdot 10^{-5}$. This timestep is within $10^{-6}$ of the maximum stable timestep for $\alpha = .6$. The simulation is performed to a final time of $T = 1$, over the periodic domain $[-1.5,1.5]^2$. In Figure \eqref{fig:SedovBlast}, we plot the solution generated by quadratic knapsack limiting with $(N, M)=(3,150)$, along with its 10 logarithmically-spaced contours between $[.01, 6]$. For comparison, the low order method is also shown. The relative positivity constraints with $\alpha = .6$ and $\alpha = .9$ are enforced, with elementwise limiting coefficients. The coloring was truncated to $[.01,6]$ for consistency with \cite{Lin23}. We observe that the low order method solution is the most dissipative. The $\alpha=.6$ and $\alpha=.9$ solutions are roughly identical to each other, though there is less oscillation present in the $\alpha=.9$ solution.

\begin{figure}%
    \centering
    \subfloat[\centering $\alpha = .6$]{{\includegraphics[width=4.2cm]{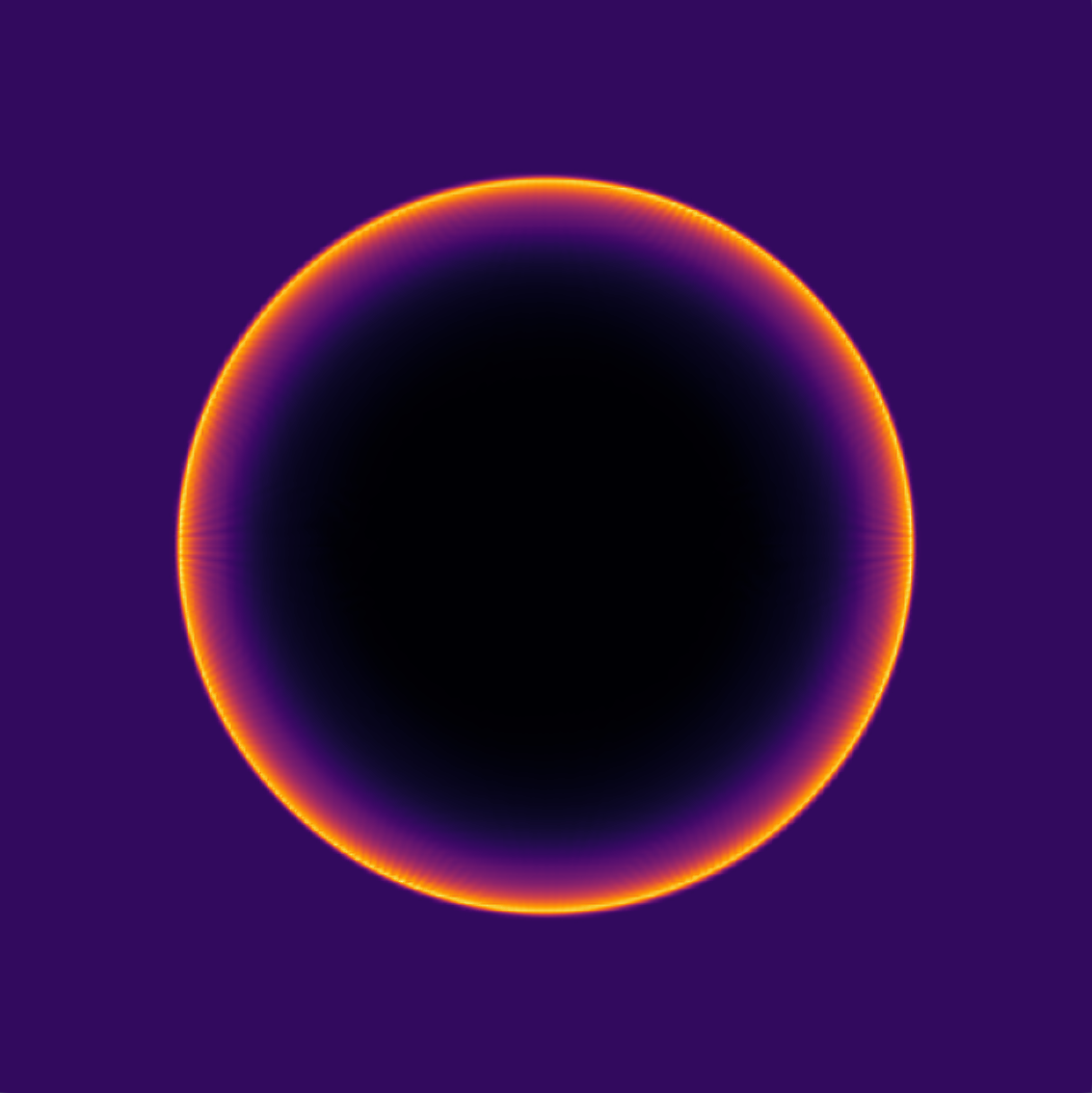} }}%
    \subfloat[\centering $\alpha = .9$]{{\includegraphics[width=4.2cm]{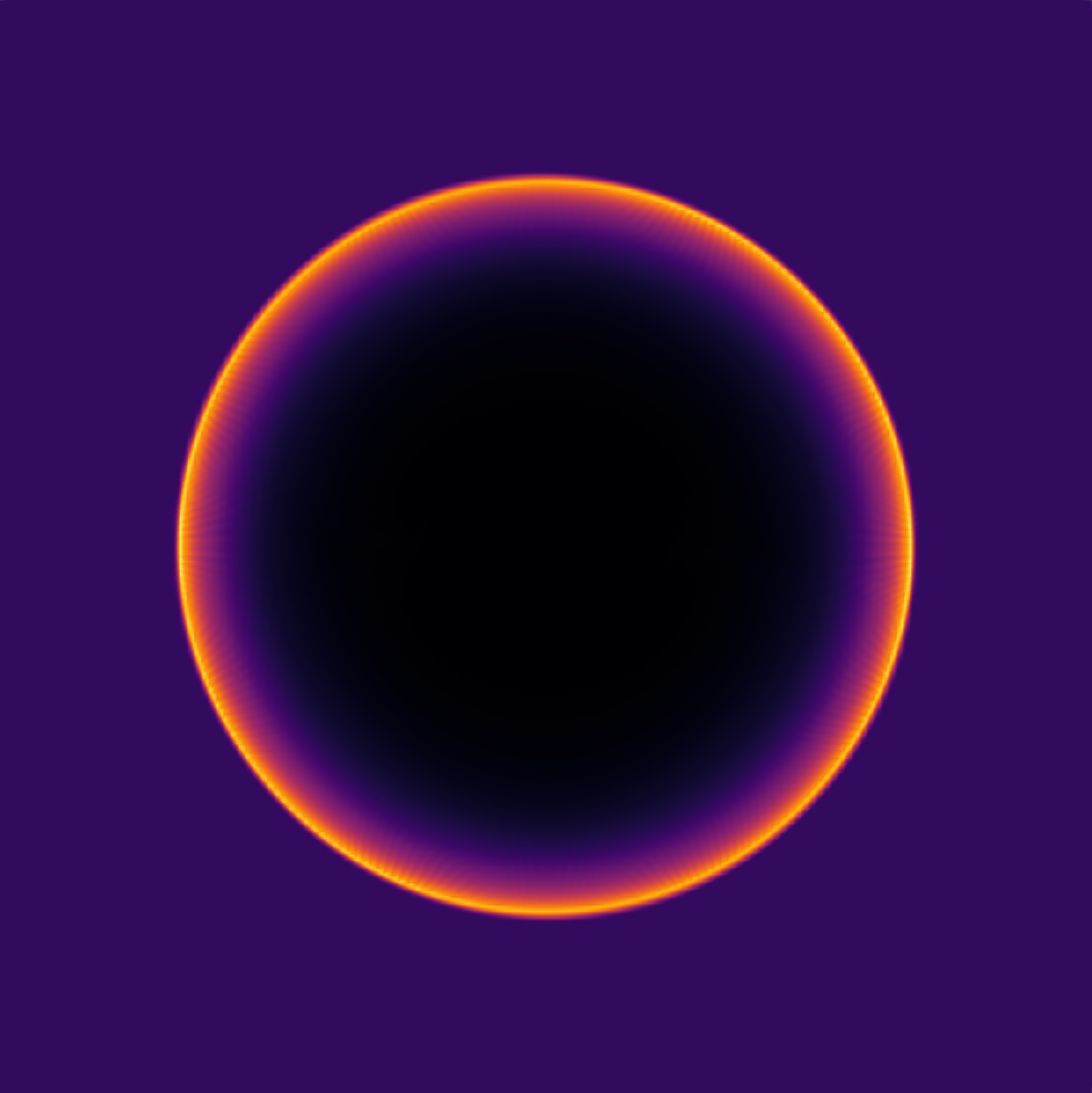} }}%
    \subfloat[\centering Low order method]{{\includegraphics[width=4.2cm]{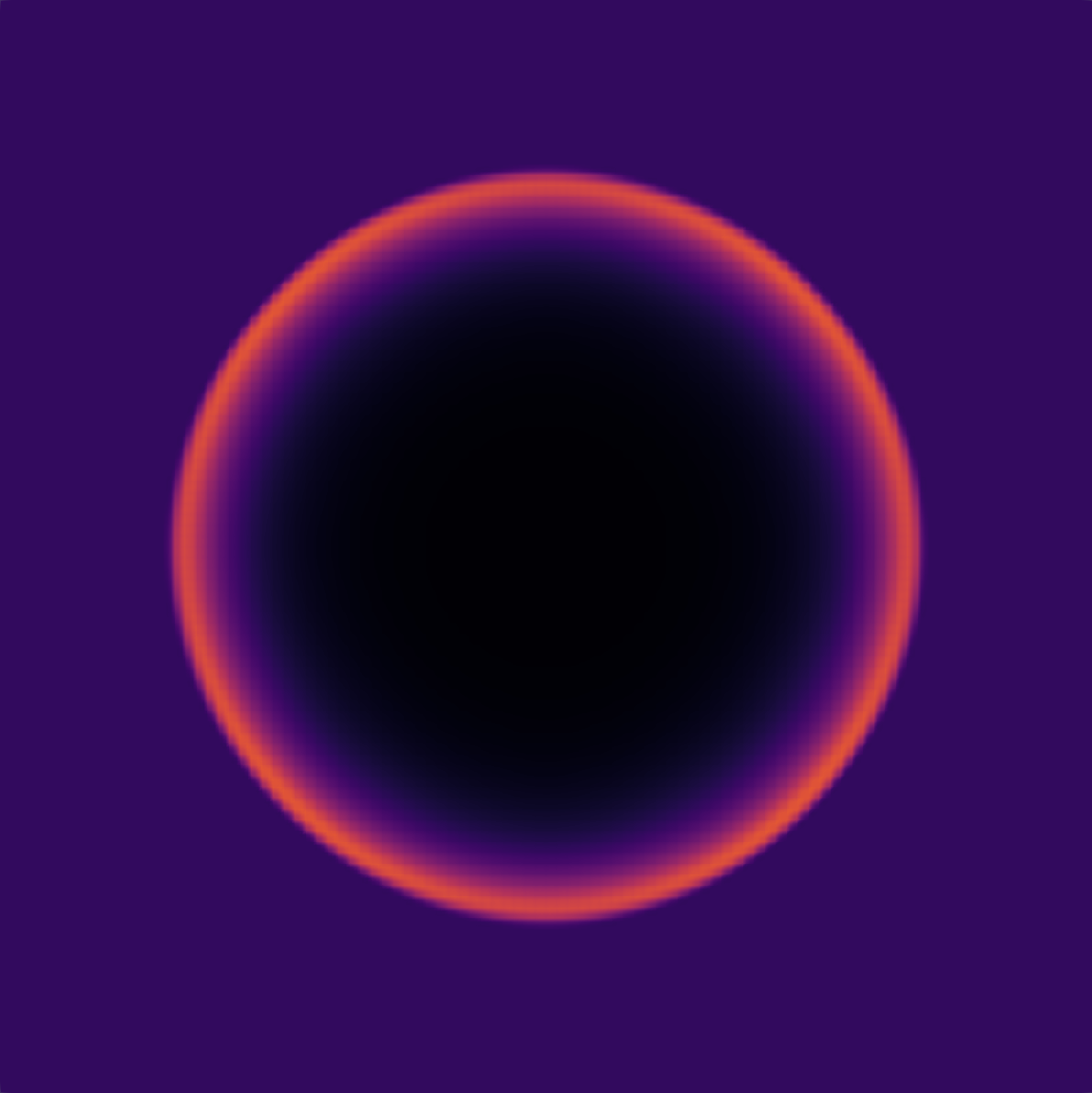} }}%
    \qquad
    \subfloat[\centering Logarithmic contours from $\alpha = .6$ solution]{{\includegraphics[width=4.2cm]{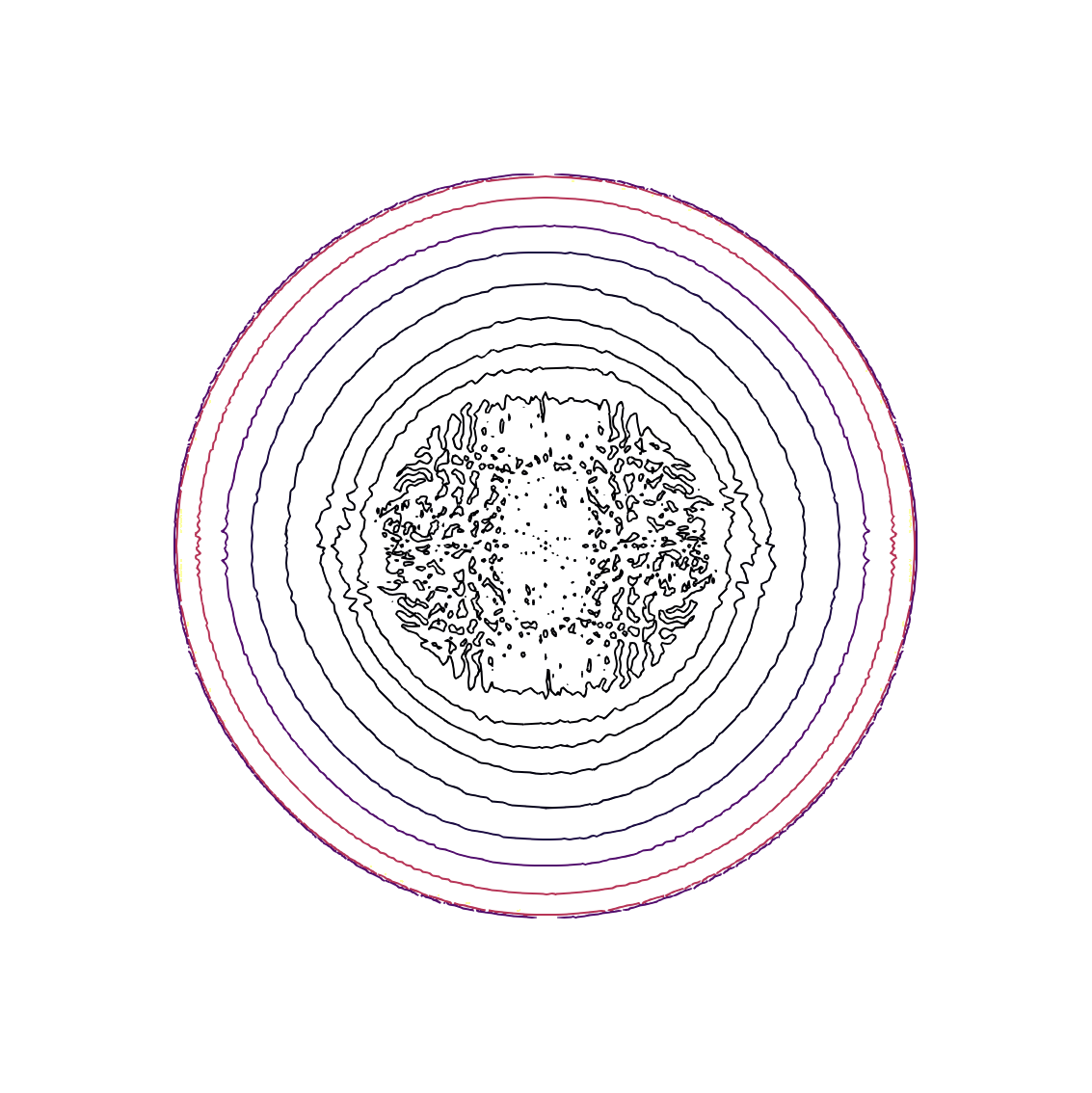} }}%
    \subfloat[\centering Logarithmic contours from $\alpha = .9$ solution]{{\includegraphics[width=4.2cm]{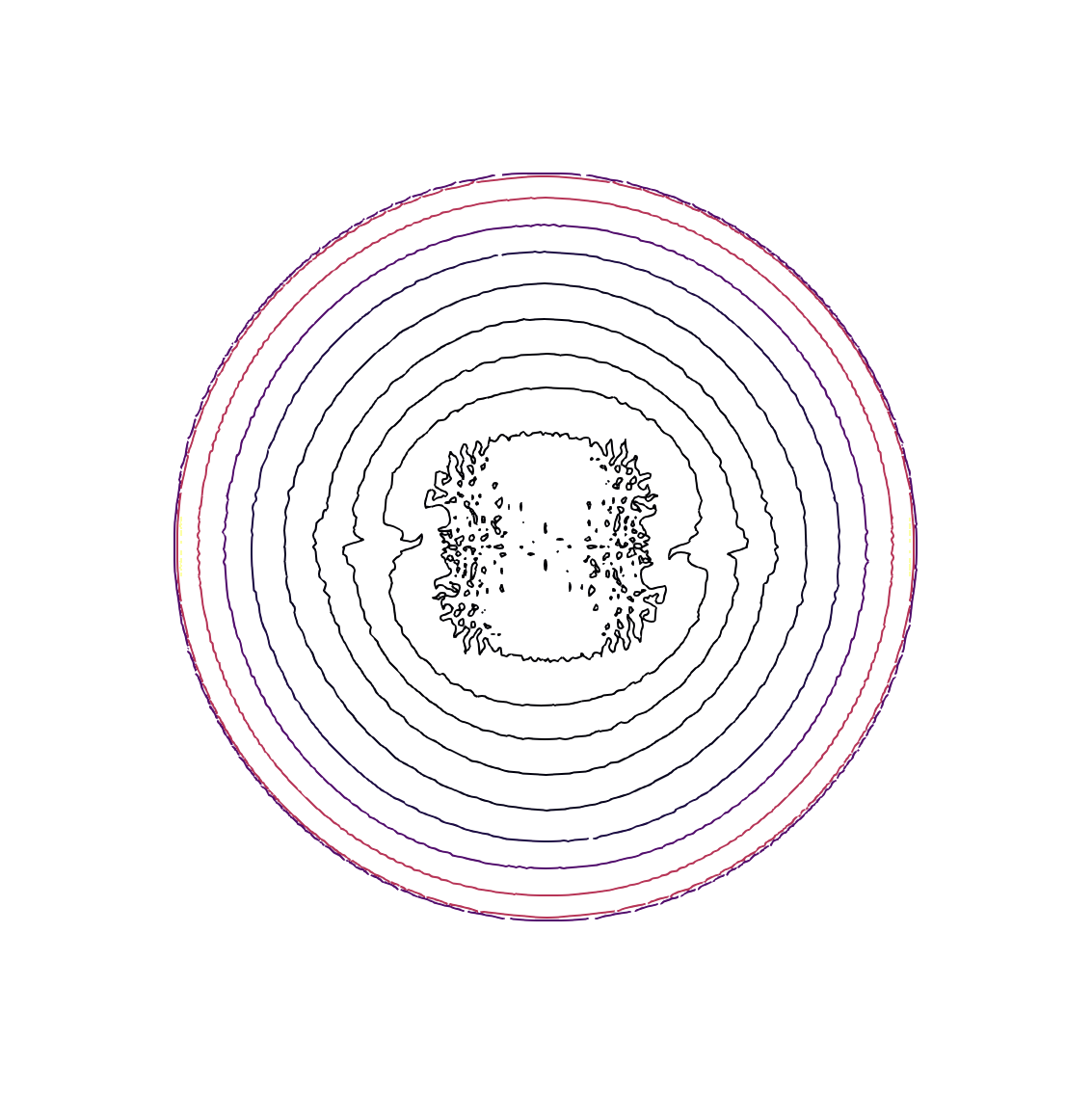} }}%
    \subfloat[\centering Logarithmic contours from low order method solution]{{\includegraphics[width=4.2cm]{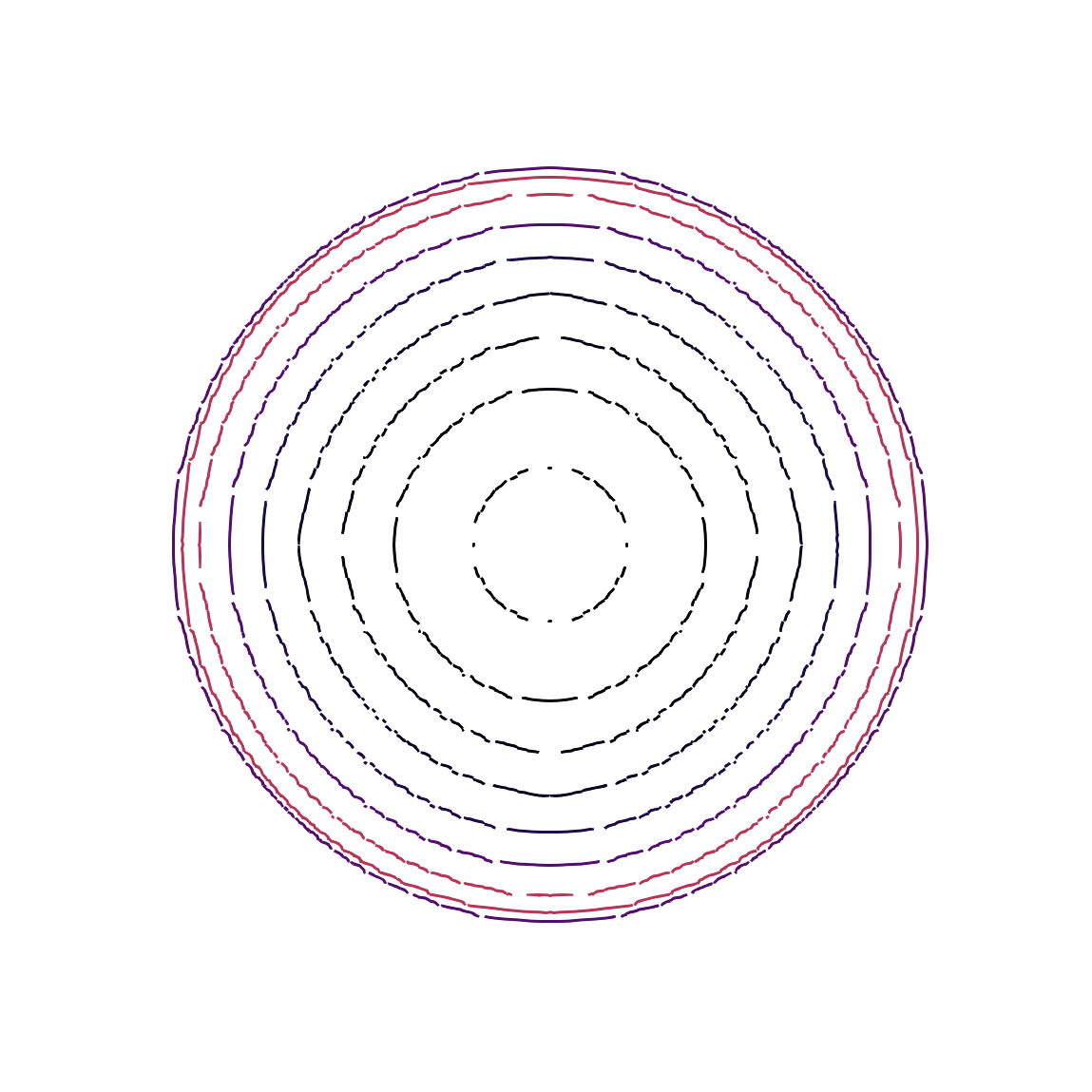} }}%
    \caption{Density of the Sedov blast wave solution at the final time $T = 1$ for $N = 3$, $M = 150$, along with logarithmically-spaced contours.}%
    \label{fig:SedovBlast}%
\end{figure}

To measure the effective CFL condition corresponding to the chosen timestep $\Delta t = 4.4 \cdot 10^{-5}$, we apply the same procedure in Section \eqref{sec:Leblanc}. Since we choose the maximum stable timestep for the $\alpha = .6$ simulation, we only measure the effective CFL condition for the $\alpha = .6$ simulation.\comment{ In Figure \eqref{fig:SedovCFL}, we show the effective CFL condition corresponding to the maximum stable timestep $\Delta t = 4.4\cdot 10^{-5}$ over time.} We again observe that the effective CFL condition is of order $\mathcal{O}(1)$ throughout the simulation. 


\subsubsection{Kelvin-Helmholtz Instability}

Last, we examine the behavior of quadratic knapsack limiting in solving the 2D compressible Euler Kelvin-Helmholtz instability problem (KHI). This problem is often used to test the behavior of 2D numerical methods in the presence of turbulent-like flow. Without enforcing positivity, collocation-type entropy stable methods tend to crash with negative density for KHI around $T \approx 3.5$ \cite{Chan22}. When enforcing positivity, quadratic knapsack limiting is able to simulate KHI until and past $T = 25$. In this case, we call this \textit{long-time} Kelvin-Helmholtz instability. The initial condition for KHI is:
\begin{align*}\rho\left(x,y,0\right)&=\frac{1}{2}+\frac{3}{4}B,&v_{1}\left(x,y,0\right)&=\frac{1}{2}\left(B-1\right),\\v_{2}\left(x,y,0\right)&=\frac{1}{10}\sin\left(2\pi x\right),&p\left(x,y,0\right)&=1,\end{align*}%
where $B=\tanh(15y+7.5) - \tanh(15y-7.5)$. The long-time KHI solution is computed on the periodic domain $[-1,1]^2$ to final time $T=25$ using the adaptive \verb|SSPRK43| timestepper, with an absolute tolerance of $10^{-6}$ and a relative tolerance of $10^{-4}$. In Figure \eqref{fig:KHI}, we plot the solutions generated by quadratic knapsack limiting with $(N,M)=(3,150)$ and $(N, M)=(7, 75)$, at times $t=10$ and $t=25$, with relative positivity constraint $\alpha = 0$. We observe that the solution with $(N, M)=(7, 75)$ has more resolved features than that of $(N, M)=(3, 150)$. 

\begin{figure}%
    \centering
    \subfloat[\centering $N = 3, M = 150,\ t = 10$]{{\includegraphics[width=6cm]{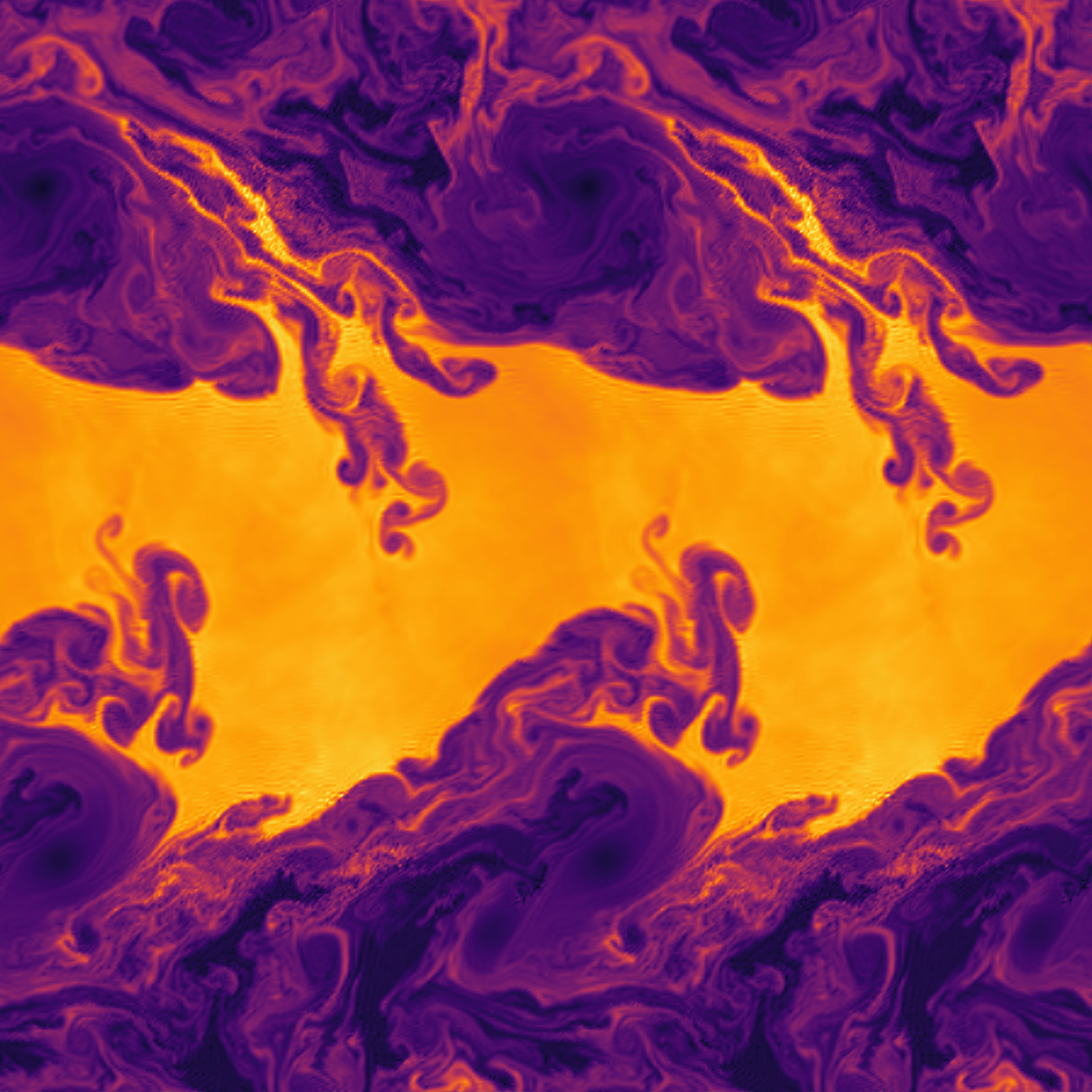} }}%
    \qquad
    \subfloat[\centering $N = 3, M = 150,\ t = 25$]{{\includegraphics[width=6cm]{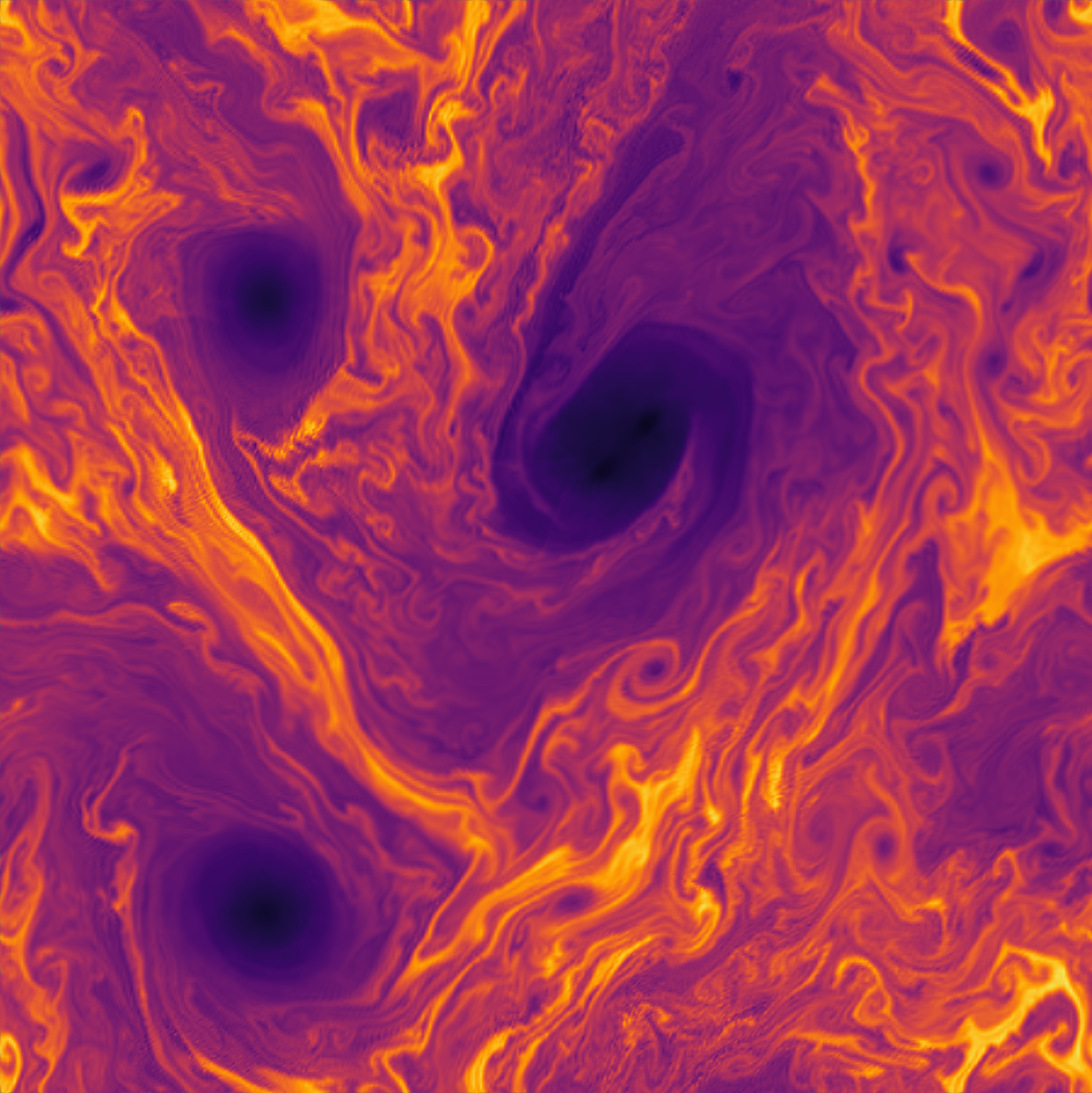} }}%
    \qquad
    \subfloat[\centering $N = 7, M = 75,\ t = 10$]{{\includegraphics[width=6cm]{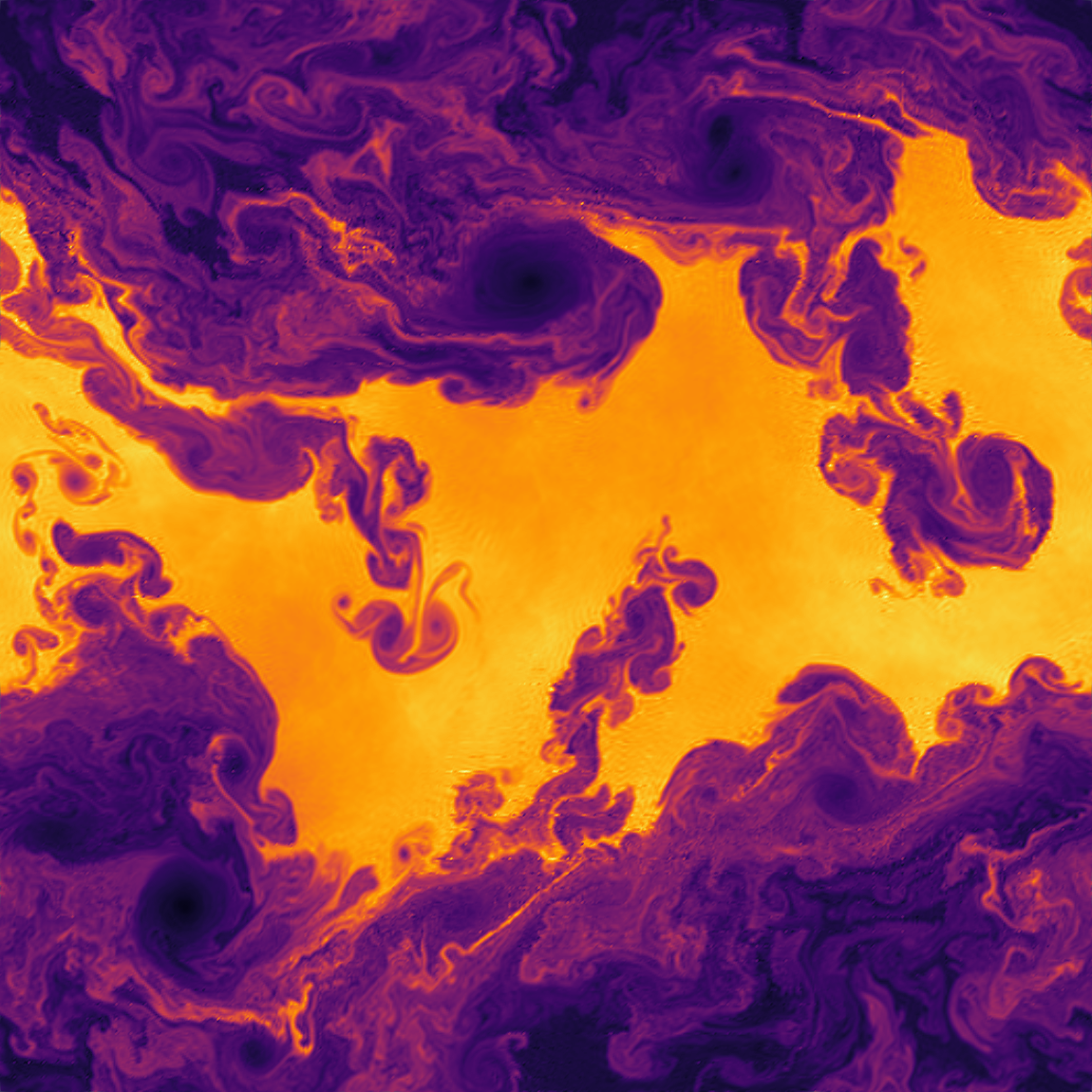} }}%
    \qquad
    \subfloat[\centering $N = 7, M = 75,\ t = 25$]{{\includegraphics[width=6cm]{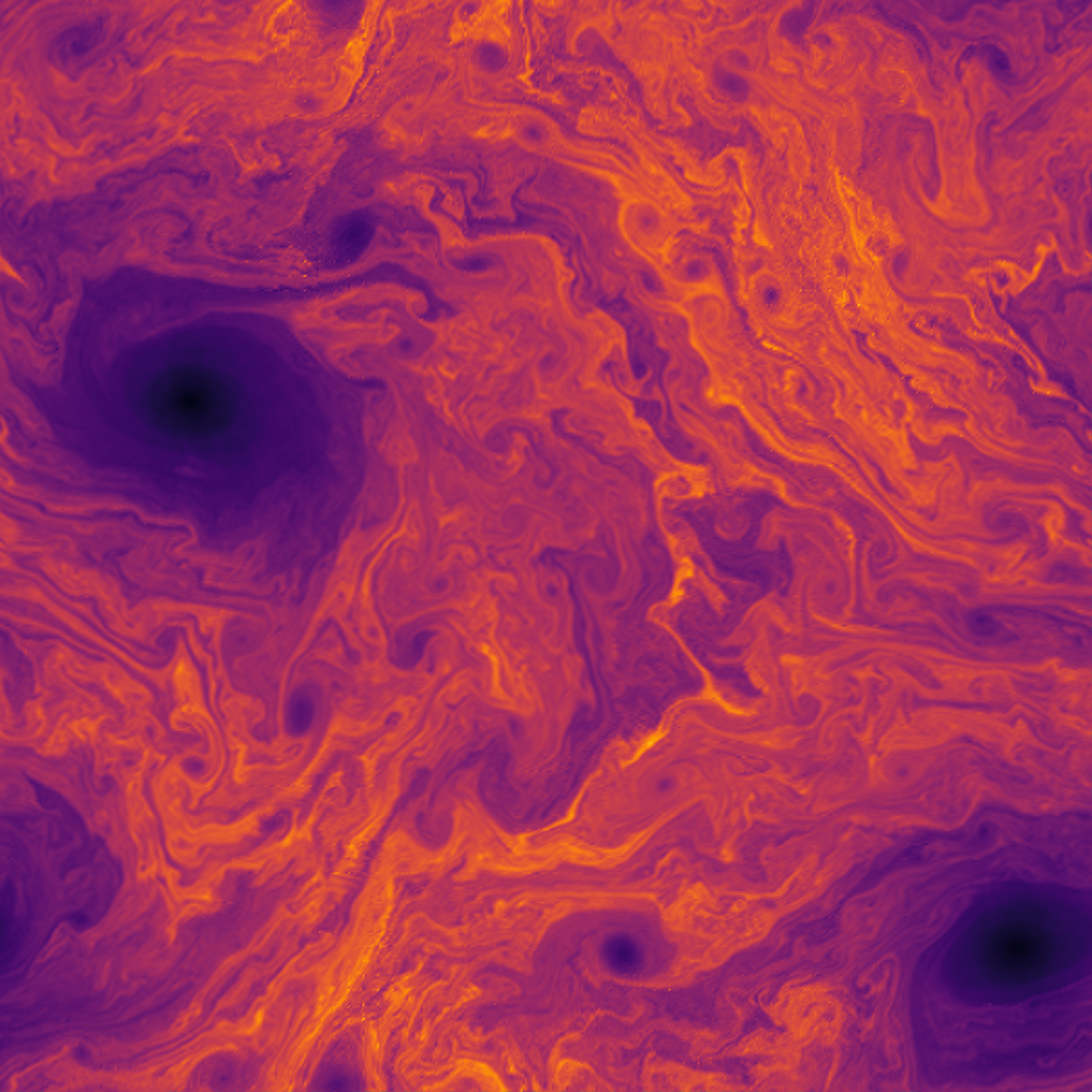} }}%
    \caption{Quadratic knapsack limiting solution to the long-time Kelvin-Helmholtz instability problem. The color range was truncated to $[.25, 2.5]$.}%
    \label{fig:KHI}%
\end{figure}

\section{Conclusion}

In this work, we proposed a high order, entropy stable discontinuous Galerkin method based on quadratic knapsack limiting. Quadratic knapsack limiting blends together a low order, entropy stable, positivity preserving, finite volume-type scheme with a high order accurate, discontinuous Galerkin spectral element method. The blending is parameterized by a vector of blending coefficients, which are elementwise bounded to ensure that the blended scheme satisfies a positivity inequality. The blending coefficients are determined through a quadratic knapsack optimization problem. We showed that such a problem can be solved using an efficient, scalar, quasi-Newton root-finding problem, which takes finitely many iterations to converge. We also showed that the resulting method is high order accurate, locally linearly stable, and positivity preserving for the compressible Euler equations, while also behaving well under adaptive timestepping and retaining a high order of convergence in time in shock-type problems. 

\section*{Acknowledgments}

The authors gratefully acknowledge helpful discussions with Raymond Park and Sebastian Perez-Salazar. 

\bibliographystyle{siamplain}
\bibliography{references}
\end{document}